\newcommand{\GETOUT}[1]{}
\newcommand{\NBC}{\Delta_{\mathrm{BC}}}
\newcommand{\rank}{\mathrm{rank}}
\newcommand{\Mon}{\mathrm{Mon}}
\newcommand{\Hilb}{\mathrm{Hilb}}
\newcommand{\A}{\mathcal{A}}
\newcommand{\I}{\mathcal{I}}
\newcommand{\C}{\mathcal{C}}
\newcommand{\B}{\mathcal{B}}
\newcommand{\ci}{\mathrm{ci}}
\newcommand{\coc}{\mathrm{coc}}
\newcommand{\chara}{\mathrm{char}}
\newcommand{\dotcup}{\ensuremath{\mathaccent\cdot\cup}}
\newtheorem{theorem}{Theorem}
\newtheorem{lemma}[theorem]{Lemma}
\newtheorem{proposition}[theorem]{Proposition}
\newtheorem{corollary}[theorem]{Corollary}
\newtheorem{conjecture}[theorem]{Conjecture}
\newtheorem{question}[theorem]{Question}
\theoremstyle{definition}
\newtheorem{definition}{Definition}
\newtheorem{remark}[theorem]{Remark}
\newtheorem{example}{Example}
\title{A Generalization of NBC Bases to Broken Circuit Complexes of Matroids}
\author[A. Egilsson]{Andri Egilsson}
\author[M. Kubitzke]{Martina Kubitzke}
\address{A. Egilsson and M. Kubitzke: The Mathematics Institute,
Reykjav\'ik University, 103 Reykjav\'ik, Iceland }
\thanks{Both authors were supported by grant no. 100038011 from the Icelandic Research Fund.}
\begin{document}

\maketitle

\begin{abstract}
Brown has shown that the Stanley-Reisner ring of the broken circuit complex of a graph has a linear system of parameters which is defined in terms of the circuits and cocircuits of the graph. Later on Brown and Sagan conjectured a special set of monomials -- a so-called NBC basis -- described in terms of the circuits and cocircuits of the graph to be a monomial basis for the corresponding quotient of the Stanley-Reisner ring and proved this to be true for theta and phi graphs. We generalize the aforementioned linear system of parameters to broken circuit complexes of regular matroids and transfer the notion of NBC bases to the general setting of regular matroids. We are able to obtain the analogous results to the ones of Brown and Sagan in this more general context. We show a deletion-contraction axiom for the existence of NBC bases. Using this results we identify two infinite classes of matroids which have NBC bases and which are the matroid theoretic analogue of theta and phi graphs.
\end{abstract}

\section{Introduction}
The main interest of this paper lies in the study of linear systems of parameters (l.s.o.p. for short) for the Stanley-Reisner ring of the broken circuit complex of a regular matroid as well as on the study of monomial bases for the quotient of the Stanley-Reisner ring with this particular l.s.o.p.
The motivation originates from work of Brown, Colbourn, Sagan and Wagner \cite{Brown,BrownColbournWagner,BrownSagan}. In \cite{Brown} and \cite{BrownColbournWagner} Brown and Brown, Colbourn and Wagner, respectively, construct a special l.s.o.p. for the Stanley-Reisner ring of 
the broken circuit complex (NBC complex for short) and the cographic matroid of a graph, respectively. This l.s.o.p. has a description in terms of the circuits and cocircuits of the underlying graph.
Built up on those results Brown and Sagan define in \cite{BrownSagan} a distinguished set of monomials in the Stanley-Reisner ring of the NBC complex of a graph which is given in terms of the circuits and cocircuits of the underlying graph. They say that a graph has an \emph{NBC basis} if this particular set is a monomial basis for the quotient of the Stanley-Reisner ring of the broken circuit complex with the 
l.s.o.p. from \cite{Brown}. They prove that under certain conditions on the graph this one has 
an NBC basis if its deletion and contraction in an edge have this property. Using this result they identify two classes of graphs -- so-called \emph{phi} and \emph{theta graphs} -- admitting NBC bases.
Our work is inspired by their suggestion to try to generalize their constructions to representable matroids.

This paper is structured as follows.
In Section \ref{sec:background} we recall some theory about simplicial complexes and matroids. Here, we focus on the broken circuit complex $\NBC(M)$ of a matroid $M$ as well as on the class of regular matroids.\\
In Section \ref{sec:lsop} it is shown that the construction of the l.s.o.p. for the NBC complex of a graph also works in the more general setting of NBC complexes of regular matroids. In order to get this result we first need to introduce the (fundamental) circuit and cocircuit incidence matrices of a matroid and transfer some results for the corresponding matrices for graphs to our setting. In Proposition \ref{prop:rank} the rank of those matrices is computed and Theorem \ref{theo:nonsing} provides a criterion for certain submatrices of a particularly signed cocircuit matrix of the matroid to be non-singular.
Those results together with a characterization of l.s.o.p.s for matroids, stated in Corollary \ref{cor:lsop}, enable us to prove that the NBC, as well as the independence complex, of a regular matroid have a special l.s.o.p. which is given in terms of its circuits and cocircuits (see Theorem \ref{th:lsop}), thereby generalizing the aforementioned result of \cite{Brown}.\\
In Section \ref{Sect:NBCbases} we try to find monomial bases for the quotient of the Stanley-Reisner ring of the NBC complex with the l.s.o.p. defined in Section \ref{sec:lsop}. Following \cite{BrownSagan} we introduce the definition of an NBC basis, see Definition \ref{def:nbc} within this section. Theorem \ref{Th:DelContr} which is the main result of this section provides a deletion-contraction axiom for the existence of NBC bases. This result is the matroid-theoretic analogue of Theorem 3.2 in \cite{BrownSagan}.\\
In the last part of this paper, i.e., in Section \ref{sec:ThetaPhi}, we are seeking for classes of matroids which admit NBC bases. We are able to identify two such classes,
both being parallel connections of certain uniform matroids.
The first class are iterated parallel connections of uniform matroids of rank $n_i-1$ on an $n_i$-element set with respect to a single basepoint, see Theorem \ref{Th:phi}. Those matroids are the analogue of the phi graphs considered in \cite[Theorem 4.2]{BrownSagan}.
In Theorem \ref{Th:theta} we prove the existence of NBC bases for iterated parallel connections with respect to different basepoints, thereby generalizing Theorem 4.4 in \cite{BrownSagan} for theta graphs.
We conclude this section with some examples and a brief discussion about which classes of matroids might possess NBC bases.

\section{Background}\label{sec:background}
\subsection{Simplicial complexes} \label{subsect:SimplicialComplexes}
We use $[n]$ to denote the set $\{1,\ldots,n\}$ and $k$ to denote an arbitrary field.
A \emph{simplicial complex} $\Delta$ on vertex set $[n]$ is a collection of subsets of $2^{[n]}$ such that $\emptyset\in \Delta$ and $F\in \Delta$ and
$G\subseteq F$ implies that $G\in \Delta$. The elements of $\Delta$ are called \emph{faces} and the \emph{dimension} of a face is its cardinality minus $1$. The \emph{dimension} of $\Delta$ is the maximal dimension of its faces. Throughout this paper we use $d-1$ to denote the dimension of a simplicial complex. We call a simplicial complex \emph{pure} if its maximal faces -- also referred to as \emph{facets} -- are of the same dimension. It is common to associate to a simplicial complex its so-called \emph{$f$-vector} $f^{\Delta}=(f_{-1}^{\Delta},f_0^{\Delta},\ldots,f_{d-1}^{\Delta})$, where
\begin{equation*}
f_i^{\Delta}=|\{F\in\Delta~|~\dim F=i\}|.
\end{equation*}
Instead of considering the $f$-vector of a simplicial complex it is sometimes more convenient to look at its \emph{$h$-vector} $h^{\Delta}=(h_0^{\Delta},h_1^{\Delta},\ldots,h_{d}^{\Delta})$ which is defined by:
\begin{equation*}
\sum_{i=0}^{d}h_i^{\Delta}t^i=\sum_{i=0}^{d}f_{i-1}^{\Delta}t^i(1-t)^{d-i}.
\end{equation*}
The polynomial $h^{\Delta}(t)=\sum_{i=0}^{d}h^{\Delta}_i t^{d-i}$ is referred to as the \emph{$h$-polynomial} of $\Delta$.
An algebraic object closely linked to a simplicial complex $\Delta$ which carries the same enumerative information is the \emph{Stanley-Reisner ring} $k[\Delta]$. More precisely, $k[\Delta]=k[x_1,\ldots,x_n]/I_{\Delta}$ where the \emph{Stanley-Reisner ideal} $I_{\Delta}$ is the ideal in $k[x_1,\ldots,x_n]$ generated by the (minimal) non-faces of $\Delta$, i.e.,
\begin{equation*}
I_{\Delta}=\left\langle \prod_{i\in F}x_i~|~F \notin \Delta\right\rangle.
\end{equation*}
It is widely used that the Stanley-Reisner ring and the $h$-vector of a simplicial complex are related in the following way, see e.g., \cite[Theorem 5.1.7]{BH-book}.
\begin{theorem} 
Let $\Delta$ be a $(d-1)$-dimensional simplicial complex and let
$h^{\Delta}=(h_{0}^{\Delta},h_1^{\Delta},\ldots,h_{d}^{\Delta})$ be its $h$-vector. Let further $\Hilb(k[\Delta],t)$ denote the Hilbert series of $k[\Delta]$. Then
\begin{equation} \label{Eq: Hilbert series}
\Hilb(k[\Delta],t)=\frac{h_0^{\Delta}+h_1^{\Delta}t+\cdots +h_d^{\Delta}t^d}{(1-t)^d}.
\end{equation}
\end{theorem}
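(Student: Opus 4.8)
The plan is to compute the Hilbert series directly from an explicit monomial $k$-basis of $k[\Delta]$ and then match the resulting rational function against the defining relation of the $h$-vector. First I would describe the basis. Since $I_{\Delta}$ is generated by the squarefree monomials $\prod_{i\in F}x_i$ with $F\notin\Delta$, a monomial $x^a=x_1^{a_1}\cdots x_n^{a_n}$ lies in $I_{\Delta}$ precisely when its support $\mathrm{supp}(x^a)=\{i:a_i>0\}$ fails to be a face of $\Delta$. Consequently the residue classes of the monomials whose support is a face of $\Delta$ form a $k$-vector space basis of $k[\Delta]$, and this basis is homogeneous with respect to the standard grading by total degree. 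I expect verifying this basis claim to be the main obstacle, as it requires the observation that the monomial ideal $I_{\Delta}$ is spanned as a $k$-vector space by those monomials divisible by some minimal non-face, together with the fact that a monomial has a non-face support exactly when it is divisible by such a generator; here one uses crucially that $\Delta$ is closed under taking subsets.

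Granting the basis, I would organize the Hilbert series by support. For a fixed face $F\in\Delta$ with $|F|=j$, the monomials with support exactly $F$ are those assigning a strictly positive exponent to each variable in $F$ and zero to the rest, so their degree generating function is
\begin{equation*}
\sum_{a_i\geq 1,\, i\in F} t^{\sum_{i\in F}a_i}=\left(\frac{t}{1-t}\right)^{|F|}.
\end{equation*}
Summing over all faces and grouping them by dimension gives
\begin{equation*}
\Hilb(k[\Delta],t)=\sum_{F\in\Delta}\left(\frac{t}{1-t}\right)^{|F|}=\sum_{i=0}^{d}f_{i-1}^{\Delta}\left(\frac{t}{1-t}\right)^{i},
\end{equation*}
where the term $i=0$ accounts for the empty face, contributing the constant $1$, consistently with $f_{-1}^{\Delta}=1$.

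Finally I would clear denominators. Pulling out a common factor of $(1-t)^{-d}$ yields
\begin{equation*}
\Hilb(k[\Delta],t)=\frac{1}{(1-t)^{d}}\sum_{i=0}^{d}f_{i-1}^{\Delta}\,t^{i}(1-t)^{d-i},
\end{equation*}
and the numerator is, by the defining identity of the $h$-vector recalled just above the statement, exactly $\sum_{i=0}^{d}h_i^{\Delta}t^{i}$. This establishes \eqref{Eq: Hilbert series} and completes the argument.
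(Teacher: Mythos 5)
Your proof is correct and complete: the decomposition of $k[\Delta]$ by supports of monomials, the computation $\sum_{F\in\Delta}\bigl(t/(1-t)\bigr)^{|F|}$, and the final rewriting via the defining identity of the $h$-vector all check out. The paper itself gives no proof of this statement, only the citation to Theorem 5.1.7 of Bruns--Herzog, and your argument is precisely the standard one found there, so there is nothing to reconcile.
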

In a considerable part of this paper we will explore and construct linear systems of parameters for special classes of simplicial complexes.
Recall that a \emph{linear system of parameters} for $k[\Delta]$ (\emph{l.s.o.p.} for short) is a set $\Theta=\{\theta_1,\ldots,\theta_d\}$ of linear forms in $k[\Delta]$ such that $k[\Delta]$ is a finitely generated $k[\theta_1,\ldots,\theta_{d}]-$module. (Note that we have $\dim k[\Delta]=d$.)\\
If $k$ is infinite such a system exists, see e.g., \cite[Lemma 5.2]{StanleyGreenBook} and assuming Cohen-Macaulayness of $\Delta$ over $k$ assures that 
\begin{equation*}
\Hilb(k[\Delta]/\Theta,t)=h_0^{\Delta}+h_1^{\Delta}t+\cdots +h_d^{\Delta}t^d,
\end{equation*}%
see e.g., \cite[Remark 4.1.11]{BH-book}.

\subsection{Matroid terminology}
In this section we review some theory about matroids, see e.g.,\cite{Oxley} for more details.\\
A \emph{matroid} $M$ is an ordered pair $(E(M),\I(M))$ consisting of a finite set $E(M)$ and a collection $\I(M)$ of subsets of $E(M)$ satisfying the following two conditions: 
\begin{itemize}
\item[(i)] $\I(M)$ is a simplicial complex on ground set $E(M)$, called \emph{independence complex}.
\item[(ii)] If $I_1$, $I_2\in \I(M)$ and $|I_1|<|I_2|$, then there exists $e\in (I_2\setminus I_1)$ such that $(I_1\cup\{e\})\in\I(M)$.
\end{itemize}
$E(M)$ is called the \emph{ground set} of $M$. Subsets of $E(M)$ are called \emph{independent} if they lie in $\I(M)$ and \emph{dependent} otherwise. If no confusion can occur we will often suppress $M$ from our notations and simply write $E$ and $\I$ for $E(M)$ and $\I(M)$, respectively.\\
A minimal dependent set in a matroid $M$ is called a \emph{circuit} of $M$ and we use $\C(M)$ to denote the set of circuits of $M$. In particular, a circuit of cardinality $1$ is called a \emph{loop}.
Similarly, the maximal independent sets of $M$ are called the \emph{bases} of $M$ and the set of bases of $M$ is denoted by $\B(M)$. Note that either of the sets $\I(M)$, $\C(M)$ and $\B(M)$ determines the matroid.\\
A widely studied subcomplex of the independence complex of a matroid $M$ is the so-called \emph{broken circuit complex} of $M$, see e.g., \cite{SwartzMatroid}, Chapter $7$ by Bj\"orner in \cite{White} and \cite{Brylawski}.
It is constructed in the following way:

First endow the ground set of $M$ with a linear ordering, i.e., assume that $E(M)=\{e_1<\cdots <e_n\}$. From each circuit $C\in \C(M)$ remove its minimal element. This yields the so-called \emph{broken circuit} $\overline{C}=C\setminus\{\min C\}$. Note that different circuits may produce the same broken circuit. Finally, the \emph{broken circuit complex of $M$} (\emph{NBC complex} for short) is
\begin{equation*}
\NBC(M)=\{F\subseteq E(M)~|~ F\mbox{ does not contain a broken circuit of } M\}.
\end{equation*}
Even though this definition depends on the chosen ordering of $E(M)$ the face numbers of this simplicial complex do not, see Section 7 in \cite{White}.
It is easy to check that $\NBC(M)$ is indeed a pure simplicial complex whose facets are the bases of $M$ which do not contain a broken circuit. In particular $\NBC(M)$ is a subcomplex of $\I(M)$ of the same dimension. Easier than to describe the faces of $\NBC(M)$ is to find an expression for its Stanley-Reisner ideal:
\begin{equation*}
I_{\NBC(M)}=\left\langle \prod_{i\in\overline{C}}x_i~\Big|~C\in\C(M)\right\rangle.
\end{equation*}
As for many combinatorial objects there exists a notion of duality one can define the dual of a matroid, likewise.
More precisely, given a matroid $M=(E(M),\I(M))$ we set
\begin{equation*}
\B^*(M)=\{E(M)\setminus B~|~B\in \B(M)\}
\end{equation*}
It can be shown that $\B^*(M)$ is the set of bases of a matroid $M^*$ on ground set $E(M)$, the \emph{dual matroid} of $M$. Circuits, loops,  independent sets and bases of $M^*$ are called \emph{cocircuits}, \emph{coloops}, \emph{coindependent sets} and \emph{cobases} of $M$, respectively. For our considerations, the circuits and cocircuits of $M$ are the crucial objects. In the following we therefore evolve a bit deeper into the subject of circuits, cocircuits and relations between those two classes of objects.\\
Let us first fix a particular basis $B\in \B(M)$. If $e\in E(M)\setminus B$ it can be verified that the set $B\cup\{e\}$ contains a unique circuit which contains $e$, see e.g., \cite[Proposition 1.1.6]{Oxley}. We call this circuit the \emph{fundamental circuit of $e$ with respect to $B$} and denote it by $\ci(B,e)$.
Similarly, if $b\in B$ then there exists a unique cocircuit of $M$ contained in $(E(M)\setminus B)\cup\{b\}$ which contains $b$. This cocircuit is called
the \emph{fundamental cocircuit of $b$ with respect to $B$} and is denoted by $\coc(B,b)$. The fundamental circuits and cocircuits of $M$ with respect to a basis $B$ are related in the following way.
\begin{lemma} \cite[Lemma 7.3.1]{White} \label{lem:CoCircuits}
Let $M=(E(M),\I(M))$ be a matroid and let $B$ be a basis of $M$. Let further $e\notin B$ and $b\in B$ be two elements of the matroid. Then
\begin{equation*}
b\in \ci(B,e)\qquad\Leftrightarrow \qquad e\in \coc(B,b).
\end{equation*}
\end{lemma}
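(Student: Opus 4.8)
The statement to prove is a biconditional relating fundamental circuits and fundamental cocircuits: for a basis $B$ of a matroid $M$, with $e \notin B$ and $b \in B$, we have $b \in \ci(B,e)$ if and only if $e \in \coc(B,b)$. Let me think about how to establish this cleanly.

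The cleanest approach uses the fact that fundamental circuits and cocircuits are governed by the orthogonality (or "circuit-cocircuit") relation in a matroid. The key structural fact I would invoke is this: a circuit $C$ and a cocircuit $D$ of the same matroid never meet in exactly one element, i.e., $|C \cap D| \neq 1$. This is a standard matroid axiom.

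**Key steps:**

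First, I would pin down the combinatorial structure of the fundamental objects. The fundamental circuit $\ci(B,e)$ is the unique circuit contained in $B \cup \{e\}$; it contains $e$ together with some subset of $B$. The fundamental cocircuit $\coc(B,b)$ is the unique cocircuit contained in $(E \setminus B) \cup \{b\}$; it contains $b$ together with some subset of $E \setminus B$. Thus the intersection $\ci(B,e) \cap \coc(B,b)$ can only possibly contain the two elements $b$ and $e$ (since $\ci(B,e) \subseteq B \cup \{e\}$ and $\coc(B,b) \subseteq (E \setminus B) \cup \{b\}$, their intersection lies inside $\{b, e\}$).

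**Main argument:** Now I would apply orthogonality. To prove the forward direction, suppose $b \in \ci(B,e)$. Since $e \in \ci(B,e)$ always, the intersection of $\ci(B,e)$ with the cocircuit $\coc(B,b)$ already contains $b$ (as $b \in \coc(B,b)$), so $|\ci(B,e) \cap \coc(B,b)| \geq 1$. By the orthogonality property that a circuit and cocircuit cannot meet in exactly one element, the intersection must contain a second element, which by the containment above can only be $e$. Hence $e \in \coc(B,b)$. The reverse direction is entirely symmetric: assuming $e \in \coc(B,b)$ forces $b \in \ci(B,e)$ by the same counting argument. The two directions together give the biconditional.

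**Main obstacle:** The principal subtlety is justifying that the relevant intersection cannot be the singleton $\{b\}$ (or $\{e\}$) alone. This rests on the orthogonality axiom, which I would cite from a standard reference such as \cite{Oxley}. A minor point to verify carefully is the containment claim that $\ci(B,e) \cap \coc(B,b) \subseteq \{b,e\}$, which follows directly from the defining set-containments of the two fundamental objects but should be stated explicitly so the application of orthogonality is airtight. Since this is a known result stated with a citation to \cite[Lemma 7.3.1]{White}, one may alternatively simply defer to that reference, but the orthogonality argument above provides a self-contained two-line proof.
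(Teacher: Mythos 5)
Your proof is correct: the containment $\ci(B,e)\cap\coc(B,b)\subseteq\{b,e\}$ holds exactly as you argue, and combining it with the standard fact that a circuit and a cocircuit never meet in exactly one element gives both directions immediately. The paper itself offers no proof of this lemma --- it is quoted from \cite[Lemma 7.3.1]{White} --- and your orthogonality argument is precisely the standard proof found in that reference, so there is nothing to reconcile.
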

We will intensively make use of this connection between fundamental circuits and cocircuits. The following proposition provides
another characterization of cocircuits which is sometimes easier to apply.
\begin{proposition}\cite[Proposition 2.1.16]{Oxley}\label{prop:cocircuits}
Let $M=(E(M),\I(M))$ be a matroid. Then the cocircuits of $M$ are the minimal subsets of $E(M)$ intersecting any basis of $M$ non-trivially.
\end{proposition}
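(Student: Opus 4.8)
The plan is to unwind the definition of a cocircuit as a circuit of the dual matroid $M^*$ and then translate the notion of dependence in $M^*$ into a statement purely about the bases of $M$. Recall that a cocircuit of $M$ is by definition a minimal dependent set of $M^*$, and that the bases of $M^*$ are exactly the complements $E(M)\setminus B$ of bases $B\in\B(M)$. So the whole proof is a chain of definitional translations rather than a substantive computation.

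First I would characterize the independent sets of $M^*$. Since $\I(M^*)$ is a simplicial complex (hence downward closed) and every independent set extends to a basis by the augmentation axiom (ii), a set $X\subseteq E(M)$ is independent in $M^*$ if and only if $X$ is contained in some basis of $M^*$, i.e.\ $X\subseteq E(M)\setminus B$ for some $B\in\B(M)$. The containment $X\subseteq E(M)\setminus B$ is just the statement $X\cap B=\emptyset$.

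Negating this yields the key equivalence: $X$ is \emph{dependent} in $M^*$ if and only if $X\cap B\neq\emptyset$ for \emph{every} basis $B\in\B(M)$, that is, $X$ meets every basis of $M$ non-trivially. Thus the family of dependent sets of $M^*$ coincides, as a family of subsets of $E(M)$, with the family of subsets of $E(M)$ intersecting every basis of $M$ non-trivially.

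Finally I would pass to minimal elements. A cocircuit is by definition a minimal dependent set of $M^*$; since the two families above are literally equal, their minimal elements under inclusion agree. Hence the cocircuits of $M$ are precisely the minimal subsets of $E(M)$ intersecting every basis of $M$ non-trivially. The only step requiring any care is the first one, namely the characterization of independence as containment in a basis, which rests on the downward-closedness of $\I(M^*)$ together with the extension of independent sets to bases; everything else is formal, and the passage to minimal elements is immediate once the two families are shown to coincide.
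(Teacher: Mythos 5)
Your proof is correct, and it is the standard duality argument: the paper itself states this proposition as a citation to \cite[Proposition 2.1.16]{Oxley} without giving a proof, and your chain of translations (cocircuit as minimal dependent set of $M^*$, dependence in $M^*$ as meeting every basis of $M$, via the fact that a set is independent in $M^*$ precisely when it avoids some basis of $M$) is exactly the textbook route. The one step you flag as needing care --- that independence is equivalent to containment in a basis, using downward closure and augmentation to a maximal independent set --- is handled correctly, so nothing is missing.
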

Our work focuses on representable matroids.
Assume that $A$ is an $(m\times n)$-matrix with entries in $k$ and let $E$ be its set of column labels. Consider all subsets of $E$ such that the corresponding columns of $A$ are linearly independent over $k$ and let $\I$ be the collection of those sets. It is easy to see that $M[A]=(E,\I)$ is a matroid, see \cite[Proposition 1.1.1]{Oxley}. In general,
a matroid $M$ is said to be \emph{representable} over $k$ if and only if $M$ is isomorphic to some $M[A]$ for some matrix $A$ over $k$. 
I.e., there exist a matrix $A=(a_{ij})\in k^{m\times n}$ and a map
\begin{align*}
\Phi:\; E(M)&\rightarrow k^m\\
e_i &\mapsto \left(
\begin{array}{ccc}
a_{1i}\\
\vdots\\
a_{mi}
\end{array}
\right)
\end{align*}
such that $I\in \I(M)$ if and only if $\Phi(I)$ is a linearly independent set of columns over $k$.
The map $\Phi$ is called a \emph{representation} of $M$ over $k$.
If a matroid $M$ can be represented by a matrix whose square submatrices all have determinant $-1$, $0$ or $1$ then it is called a \emph{regular}. Several other characterizations of regular matroids are known.
\begin{proposition}\cite[Theorem 6.6.3]{Oxley}
Let $M$ be a matroid. Then the following are equivalent:
\begin{itemize}
\item[(i)] $M$ is regular.
\item[(ii)] $M$ is representable over every field.
\item[(iii)] $M$ is representable over $\mathbb{F}_2$, the $2$-element field, and over another field $k$ with $\chara(k)\neq 2$.
\end{itemize}
\end{proposition}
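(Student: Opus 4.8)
The plan is to prove the cycle of implications $(i) \Rightarrow (ii) \Rightarrow (iii) \Rightarrow (i)$, in which the first two steps are routine reductions and the third is the substantial one.

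For $(i) \Rightarrow (ii)$ I would start from a representing matrix $A$ all of whose square submatrices have determinant in $\{-1,0,1\}$. In particular each entry, being a $1\times 1$ minor, lies in $\{-1,0,1\}$, so $A$ has integer entries and may be reduced modulo the characteristic of an arbitrary field $k$ to yield a matrix $\bar{A}$ over $k$. Independence of a set of columns is witnessed by the non-vanishing of a maximal minor of the corresponding submatrix; since every such minor equals $0$ or $\pm 1$ over $\mathbb{Q}$, and $\pm 1$ is nonzero in every field, a set of columns is independent over $\mathbb{Q}$ if and only if it is independent over $k$. Hence $M[\bar{A}]=M$, and $M$ is representable over $k$. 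For $(ii) \Rightarrow (iii)$ I would simply specialize: $\mathbb{F}_2$ is a field, and $\mathbb{Q}$ is a field of characteristic $0\neq 2$, so representability over every field furnishes the two required representations.

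The hard part is $(iii) \Rightarrow (i)$, which is the essence of Tutte's characterization of regular matroids. I would fix a basis $B$ and write a binary representation in standard form $[\,I_r \mid D\,]$ with $D$ a $0/1$ matrix over $\mathbb{F}_2$. The key input is the essential uniqueness of binary representations: the positions of the nonzero entries of $D$ are forced by the matroid, since the support of the column indexed by $e$ records the fundamental circuit $\ci(B,e)$. Consequently any representation of $M$ over the field $k$ with $\chara(k)\neq 2$ can be brought, by row and column scaling, into a standard form $[\,I_r \mid D'\,]$ in which $D'$ has exactly the same support as $D$. The task then reduces to rescaling the nonzero entries of $D'$ to $\pm 1$ so that the integer matrix $[\,I_r \mid D'\,]$ becomes totally unimodular.

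The mechanism is a signing argument, and this is where I expect the main obstacle to lie. Total unimodularity of a $\{0,\pm 1\}$-matrix is governed by a parity condition on the cycles of the bipartite support graph of $D$ (Camion's criterion), the $\mathbb{F}_2$-representation forces these cycles to be consistent modulo $2$, and the representation over $k$ with $\chara(k)\neq 2$ upgrades this modular consistency to a coherent $\pm 1$ signing. The delicate points are aligning the two representations on a common support and verifying that the resulting signing actually yields a totally unimodular matrix rather than merely another $k$-representation; this rests on the precise interplay between the circuit and cocircuit incidence structure over $\mathbb{F}_2$ and the linear dependencies over $k$, and it is exactly the content that makes Tutte's theorem nontrivial.
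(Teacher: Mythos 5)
The paper does not prove this proposition at all: it is quoted verbatim from Oxley \cite[Theorem 6.6.3]{Oxley} as background, so the comparison here is with the standard literature proof rather than with anything in the paper. Your reductions $(i)\Rightarrow(ii)$ and $(ii)\Rightarrow(iii)$ are complete and correct, and they coincide with the routine arguments: a totally unimodular matrix has integer entries, every relevant maximal minor is $0$ or $\pm 1$, and since $\pm 1$ survives reduction into the prime subfield of any field, the column matroid is unchanged; specializing to $\mathbb{F}_2$ and $\mathbb{Q}$ gives $(iii)$. Your observation that the support of the standard-form matrix $[\,I_r \mid D'\,]$ over any field is forced by the fundamental circuits with respect to the chosen basis is also correct and is genuinely part of the standard proof.

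The genuine gap is the implication $(iii)\Rightarrow(i)$, and you acknowledge it yourself (``this is where I expect the main obstacle to lie''). As written, the appeal to Camion's criterion is close to circular: Camion's signing theorem (a binary matrix admits a totally unimodular signing, characterized by a mod-$4$ condition on Eulerian submatrices) is a statement of essentially the same depth as the one being proved, and the sentence ``the representation over $k$ upgrades this modular consistency to a coherent $\pm 1$ signing'' names the desired conclusion rather than a mechanism. The executable argument needs three ingredients you do not supply: (a) for standard-form representations, a square submatrix of $D$ with rows $X\subseteq B$ and columns $Y$ is nonsingular if and only if $(B\setminus X)\cup Y$ is a basis of $M$, so the zero/nonzero pattern of \emph{all} subdeterminants (not merely of the entries) agrees between the $\mathbb{F}_2$- and $k$-representations; (b) a scaling argument (normalize the entries on a spanning forest of the bipartite support graph to $1$, then use the singular $2\times 2$ patterns, which are singular over $k$ by (a), to force all remaining entries to be $\pm 1$ in the prime field), so that $D'$ is literally the image in $k$ of an integer $\{0,\pm 1\}$-matrix $D^{\#}$; and (c) the minimal-counterexample step where $\chara(k)\neq 2$ actually enters: if $D^{\#}$ were not totally unimodular, a minimal offending square submatrix would have real determinant $\pm 2$, hence determinant $0$ over $\mathbb{F}_2$, so by (a) the corresponding set is dependent in $M$ and the determinant must also vanish over $k$ --- contradicting that $\pm 2\neq 0$ when $\chara(k)\neq 2$. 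Without (a)--(c), your outline stops exactly at the nontrivial content of Tutte's theorem.
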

More background on matroids can be found in \cite{Oxley}.

\section{Special linear systems of parameters for regular matroids} \label{sec:lsop}
The aim of this section is to construct a l.s.o.p. for the Stanley-Reisner ring of the independence as well as the NBC complex of a regular matroid which can be described in terms of the circuits and cocircuits of the matroid.
The given construction is a generalization of the l.s.o.p. given in \cite{BrownColbournWagner} and \cite{Brown} for cographic matroids and graphic matroids as well as for NBC complexes of graphs, respectively. The ideas for the proof are inspired by their ideas and the main task is to translate results from graph theory into the language of matroids and to prove their analogues in this context.

We first introduce the notion of $2$-connected matroids and briefly explain why we can restrict ourselves to the consideration of this special class when searching l.s.o.p.s.

\subsection{\texorpdfstring{Restriction to $2$-connected matroids}{Restriction to 2-connected matroids}}
$2$-connectedness for matroids can be considered the matroid-theoretical analogue of connectedness for graphs.
For a matroid $M=(E(M),\I(M))$ and $e\in E(M)$ we set 
\begin{equation*}
 C(e):=\{e\}\cup\{f\in E(M)~|~M \mbox{ has a circuit containing both, }e\mbox{ and }f\}.
\end{equation*}
Then one can write $E(M)=\bigcup_{i=1}^r C(e_i)$ for elements $e_1,\ldots,e_r\in E(M)$. If there exists a single $e\in E(M)$ such that $C(e)=E(M)$ we call $M$ \emph{$2$-connected} and in this situation each element of the ground set has the required property.
Otherwise, $M$ is \emph{disconnected} and $C(e_1),\ldots,C(e_r)$ are the \emph{connected components of $M$}. It is straightforward to show that the latter ones are pairwise disjoint which implies that the NBC complex of $M$ equals the join of the NBC of its connected components $M_1,\ldots, M_r$, i.e.,
\begin{equation} \label{Eq:NBC}
\NBC(M)=\ast_{i=1}^r \NBC(M_i).
\end{equation}
Thus for the Stanley-Reisner rings it holds that
\begin{equation*}
k[\NBC(M)]=k[\NBC(M_1)]\otimes_k \cdots \otimes_k k[\NBC(M_r)].
\end{equation*}
If $\Theta_i$ is a l.s.o.p. for $k[\NBC(M_i)]$ for $1\leq i\leq r$, then
\begin{equation*}
k[\NBC(M)]/\left( \Theta_1\dotcup\cdots\dotcup\Theta_r\right)\cong k[\NBC(M_1)]/\Theta_1\otimes_k\cdots \otimes_k k[\NBC(M_r)]/\Theta_r
\end{equation*}
which shows that $\Theta_1\dotcup\cdots \dotcup\Theta_r$ is an l.s.o.p. for $\NBC(M)$.
When constructing an l.s.o.p. for the NBC complex of $M$ it is therefore enough to consider its $2$-connected components.

\subsection{The circuit and cocircuit incidence matrix of a matroid} \label{subsect:CircuitCocircuit}

It is common to associate to a given graph its circuit and cocircuit incidence matrices -- their entries essentially indicating if a certain edge lies in a particular circuit and cocircuit, respectively. To proceed one step further, the graph is usually assigned an overall orientation and accessorily each of its circuits and cocircuits is directed separately. Given those orientations one signs the entries of the incidence matrices with $+1$ or $-1$ depending on whether the overall orientation of an edge and its orientation in a particular circuit and cocircuit coincide or whether they do not. Assuming the same column labelling the rows of the two incidence matrices can be shown to be orthogonal to each other, see e.g., \cite[Theorem 6.6]{Thulasiraman}. We will refer to this property as the \emph{orthogonal property}.
Mimicking the described construction for matroids yields the following definitions.
Consider a rank $r$ matroid $M$ on ground set $E(M)=\{e_1,\ldots,e_n\}$ and let $B$ be a basis of $M$.
Label the fundamental circuits of $M$ w.r.t. $B$ with $C_1,\ldots,C_{n-r}$. 
The remaining circuits of $M$ are labelled with $C_{n-r+1},\ldots, C_m$.
Let now $\A_M(\C)$ be an $(m\times n)$-matrix whose rows and columns are indexed with the circuits $C_1,\ldots,C_m$ and the elements $e_1,\ldots ,e_n$ of $M$, respectively. The entry $a_{ij}^M$ is defined by
\begin{equation*}
a_{ij}^M=
\begin{cases}
1,\qquad \mbox{if } e_j\in C_i\\
0,\qquad \mbox{otherwise}.
\end{cases}
\end{equation*}
The matrix $\A_M(\C)$ is called the \emph{circuit incidence matrix of $M$ w.r.t. $B$}.

In order to define the cocircuit incidence matrix of $M$ we label the fundamental cocircuits of $M$ w.r.t. $B$ with $C^*_1,\ldots , C^*_r$ and the remaining cocircuits with $C^*_{r+1},\ldots,C^*_s$. Let $\A_M(\C^*)$ be an $(s\times n)$-matrix whose rows and columns are labelled with the cocircuits $C^*_1,\ldots, C^*_s$ and the elements $e_1,\ldots ,e_n$, respectively. The entry $a_{ij}^*$ is defined as
\begin{equation*}
a_{ij}^*=
\begin{cases}
1,\qquad \mbox{if } e_j\in C^*_i\\
0,\qquad \mbox{otherwise}.
\end{cases}
\end{equation*}
The matrix $\A_M(\C^*)$ is called the \emph{cocircuit incidence matrix of $M$ w.r.t. $B$}. \\
Note that cocircuit and circuit incidence matrices w.r.t. different bases and orderings of $E(M)$ differ only in the order of their rows and columns, respectively.
For simplicity we will therefore just speak of the circuit and cocircuit incidence matrices of $M$ without mentioning the particular basis used.\\
Imposing a signing on the entries of those two matrices, in general we cannot expect that -- as for graphs -- the orthogonality property holds. However, if there exist signings $\widetilde{\A}_M(C)$ and $\widetilde{\A}_M(C^*)$ of $\A_M(\C)$ and $\A_M(\C^*)$, respectively such that over $\mathbb{R}$ the orthogonality property holds, i.e.,
$\widetilde{\A}_M(C)\cdot (\widetilde{\A}_M(C^*))^T=0$ then $M$ is called \emph{signable}. Here as well as in the sequel, the tilde indicates that the matrices are signed. Signings of the circuit and cocircuit matrices such that the orthogonality property holds over $k$ will be called \emph{compatible signings over $k$}. 
Since those signings always come in pairs we will use the term \emph{admissible signing} if we only consider a signing of one of $\A_M(\C)$ and $\A_M(\C^*)$ but if there exists a signing of the other matrix such that the orthogonality property holds.
The following proposition brings us back to the class of matroids considered so far, i.e., regular matroids.
\begin{proposition} \cite[Proposition 13.4.5]{Oxley}
A matroid is signable if and only if it is regular.
\end{proposition}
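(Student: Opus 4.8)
The plan is to route both implications through the classical equivalence between regularity and the existence of a totally unimodular representation over $\reals$, i.e.\ a representation $A$ of $M$ all of whose square submatrices have determinant in $\{-1,0,1\}$; this is precisely the defining property of regularity recalled above, and it makes the orthogonality between the row space and the null space of $A$ the natural bridge to the signed incidence matrices.

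For the direction \emph{regular $\Rightarrow$ signable}, I would start from a totally unimodular matrix $A$ with column set $E(M)$ representing $M$ over $\reals$. Since the row space and null space of $A$ are orthogonal complements in $\reals^{n}$, it suffices to realize the signed circuit matrix inside the null space and the signed cocircuit matrix inside the row space. For each circuit $C\in\C(M)$ there is, up to scaling, a unique vector $v_{C}$ in the null space of $A$ with support exactly $C$; by Cramer's rule its coordinates are, up to a common factor, the relevant $\rank(M)\times\rank(M)$ minors of $A$, hence $\{-1,0,1\}$-valued after normalization by total unimodularity. Collecting the $v_{C}$ as rows yields a signing $\widetilde{\A}_{M}(\C)$ of $\A_{M}(\C)$. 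Dually, the minimal-support vectors of the row space of $A$ are supported exactly on the cocircuits of $M$ and become $\{-1,0,1\}$-valued after the same normalization, giving a signing $\widetilde{\A}_{M}(\C^{*})$. The relation $\widetilde{\A}_{M}(\C)\cdot(\widetilde{\A}_{M}(\C^{*}))^{T}=0$ is then automatic, since its rows lie in the null space and row space of $A$ respectively.

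The substantive direction is \emph{signable $\Rightarrow$ regular}. Fixing a basis $B$ and ordering the columns as $B$ followed by $E(M)\setminus B$, I would first record that each fundamental circuit $\ci(B,e_{i})$ meets $E(M)\setminus B$ only in $e_{i}$, while each fundamental cocircuit $\coc(B,b_{j})$ meets $B$ only in $b_{j}$; after scaling rows, the fundamental parts of the two signed matrices therefore take the reduced forms $[\,\tilde{D}\mid I\,]$ and $[\,I\mid\tilde{E}\,]$ with entries in $\{-1,0,1\}$. The orthogonality relation restricted to these rows forces $\tilde{E}=-\tilde{D}^{T}$, so both signed matrices are governed by the single $\{-1,0,1\}$ matrix $\tilde{D}$. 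I would then set $R:=[\,I\mid-\tilde{D}^{T}\,]$ over $\reals$ and claim that $R$ is a totally unimodular representation of $M$, which proves regularity. Lemma \ref{lem:CoCircuits} already shows that the support of the column of $R$ indexed by $e_{i}$ equals $\ci(B,e_{i})\setminus\{e_{i}\}$, so $R$ reproduces the correct fundamental circuits as sets and hence represents $M$ over $\mathbb{F}_{2}$.

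The crux, and the step I expect to be the main obstacle, is upgrading this to a representation over $\reals$ and establishing total unimodularity, since over $\reals$ the support pattern alone does not determine the matroid. Here I would invoke the \emph{full} orthogonality, over all circuits and cocircuits rather than only the fundamental ones: it shows that the span $W$ of the rows of $\widetilde{\A}_{M}(\C)$ is orthogonal to the row space of $R$, and a dimension count (the identity blocks give $\dim W=n-\rank(M)$ and $\rank R=\rank(M)$) identifies $W$ with the null space of $R$. One must then argue that the minimal supports of $W$ are exactly the circuits of $M$ -- equivalently, that the signed circuit vectors are the elementary chains of a regular chain group -- so that $M[R]=M$ over $\reals$; total unimodularity of $R$ then follows from Tutte's characterization of regular chain groups by $\{-1,0,1\}$-valued elementary chains, and Proposition \ref{prop:cocircuits} supplies the dual minimal-support description needed to match circuits and cocircuits. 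Verifying that the signing produces a genuine regular chain group, rather than merely a $\{-1,0,1\}$ matrix with the correct support, is the delicate point where the real content of the chain-group machinery enters.
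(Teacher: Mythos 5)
The paper contains no proof of this proposition to compare against: it is imported verbatim from Oxley (Proposition 13.4.5), and the authors use it as a black box. So your attempt stands or falls on its own, and while your first direction (regular $\Rightarrow$ signable, via a totally unimodular representation, Cramer's rule for the elementary vectors of the null space, and the row space/null space duality) is the standard argument and essentially correct, the converse direction contains a genuine error in addition to the gap you flagged. The step ``$R$ reproduces the correct fundamental circuits as sets and hence represents $M$ over $\mathbb{F}_2$'' is a non sequitur: the fundamental circuits with respect to a single basis determine a \emph{binary} matroid, but not a general one, and at this point of the argument you do not yet know that $M$ is binary --- binarity is essentially what is being proved. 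Concretely, $U(2,4)$ and the binary matroid represented over $\mathbb{F}_2$ by the columns $(1,0)^T,(0,1)^T,(1,1)^T,(1,1)^T$ have the same fundamental circuits $\{1,2,3\}$ and $\{1,2,4\}$ with respect to the basis $\{1,2\}$, yet $\{3,4\}$ is a circuit of the latter only. The repair is cheap and is even present in the paper: the Remark following the proposition observes that a compatible signing forces $|C\cap C^*|$ to be even for \emph{every} circuit--cocircuit pair, and evenness of all such intersections is the classical characterization of binary matroids (Oxley, Theorem 9.1.2); only after invoking this may you conclude that $R$ reduced mod $2$ represents $M$ over $\mathbb{F}_2$.

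The second problem is the one you flagged but did not resolve: showing that the minimal supports of $W=\ker(R)$ are exactly the circuits of $M$, i.e.\ that the signed circuit vectors generate a regular chain group. This is not a routine hypothesis check to be outsourced to ``chain-group machinery'' --- it \emph{is} the hard direction, since a priori $W$ could contain vectors whose supports are properly contained in circuits of $M$, and nothing in your sketch excludes this; as written, the proposal is a plan rather than a proof. One way to close it using only tools already in this paper: the proofs of Proposition \ref{prop:rank} and Theorem \ref{theo:nonsing} use nothing but the existence of compatible signings, never regularity itself. Running them for a signable $M$ shows that an $(r\times r)$-submatrix of $R=\widetilde{\A}_M^B(\C^*)$ is nonsingular over $\reals$ if and only if its columns are indexed by a basis of $M$: nonsingularity forces independence via orthogonality with a compatibly signed circuit row, and conversely a basis $B'$ gives $R=D\cdot\widetilde{\A}_M^{B'}(\C^*)$ with $D$ invertible, so the $B'$-columns of $R$ form a nonsingular block. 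Hence $R$ represents $M$ over $\reals$; combined with binarity from the repaired first step, $M$ is representable over $\mathbb{F}_2$ and over $\reals$, hence regular by Oxley's Theorem 6.6.3 quoted in Section \ref{sec:background}, with no appeal to Tutte's theorem at all.
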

\begin{remark}
Let $M$ be a regular matroid and let $\widetilde{\A}_M(\C)$ and $\widetilde{\A}_M(\C^*)$ be compatible signed circuit and cocircuit matrices of $M$, respectively. If $p_{C}=(p_1,\ldots,p_n)$ and $q_{C^*}=(q_1,\ldots,q_n)$ are rows of $\widetilde{\A}_M(\C)$ and $\widetilde{\A}_M(\C^*)$, respectively, it holds that
\begin{equation}\label{eq:comp}
p_C\cdot (q_{C^*})^T=\sum_{i=1}^np_iq_i=0
\end{equation}
Since $p_{C}$, $q_{C^*}\in\{-1,0,1\}$ there
must be as many positive terms in Equation (\ref{eq:comp}) as negative terms. In particular, the number of non-zero terms in the above sum is even. Over a field of characteristic $2$ each signed circuit and cocircuit matrix coincides with the corresponding unsigned matrix. By the previous discussion, those matrices are thus compatible in this case.
\end{remark}
Rather than being interested in the complete circuit and cocircuit matrix of $M$ we mostly consider submatrices which consist of the rows corresponding to the fundamental circuits and cocircuits of $M$ w.r.t. a basis $B$. Those matrices are referred to as the \emph{fundamental circuit} and \emph{cocircuit incidence matrix w.r.t. $B$} and denoted by $\A_M^{B}(\C)$ and $\A_M^{B}(\C^*)$, respectively. 
Using compatible signings we can determine the rank of those matrices.
\begin{proposition}\label{prop:rank}
Let $M$ be a regular matroid on ground set $E(M)=\{e_1,\ldots,e_n\}$ of rank $r$. Let $B\in\B(M)$ be a basis of $M$ and let $\widetilde{\A}_M(\C)$ and $\widetilde{\A}_M(\C^*)$ be compatible signed circuit and cocircuit incidence matrices of $M$ with respect to $B$, respectively. Then
\begin{equation} \label{eq:rank1}
\rank\; \widetilde{\A}_M^B(\C)=\rank\; \widetilde{\A}_M(\C)=n-r
\end{equation}
and
\begin{equation} \label{eq:rank2}
\rank\; \widetilde{\A}_M^B(\C^*)=\rank\; \widetilde{\A}_M(\C^*)=r.
\end{equation}
\end{proposition}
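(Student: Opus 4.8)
The plan is to establish the two rank equalities separately, with equation (\ref{eq:rank2}) for the cocircuit matrix being the key computation; equation (\ref{eq:rank1}) for the circuit matrix will then follow from an orthogonality argument using compatible signings. I would first focus on the fundamental cocircuit incidence matrix $\widetilde{\A}_M^B(\C^*)$, whose rows correspond to the $r$ fundamental cocircuits $\coc(B,b_1),\ldots,\coc(B,b_r)$ associated to the elements $b_1,\ldots,b_r$ of the basis $B$. The central observation is that by Lemma \ref{lem:CoCircuits}, we have $e\in\coc(B,b)$ if and only if $b\in\ci(B,e)$, and in particular $b\in\coc(B,b)$ itself (since trivially $b\in\ci(B,b)$ is not the right statement, but $\coc(B,b)$ is the unique cocircuit in $(E\setminus B)\cup\{b\}$ containing $b$). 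Thus the submatrix of $\widetilde{\A}_M^B(\C^*)$ formed by the $r$ columns indexed by the basis elements $b\in B$ is, up to the signing, the identity matrix: the $(i,j)$-entry is nonzero exactly when $b_j\in\coc(B,b_i)$, which happens if and only if $i=j$. A signed identity matrix has rank $r$ over $\reals$, so $\rank\widetilde{\A}_M^B(\C^*)\geq r$.

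Next I would establish the reverse inequality $\rank\widetilde{\A}_M(\C^*)\leq r$ for the full signed cocircuit matrix, which together with the trivial inequality $\rank\widetilde{\A}_M^B(\C^*)\leq\rank\widetilde{\A}_M(\C^*)$ will pin down both quantities in (\ref{eq:rank2}) to equal $r$. For the upper bound I would use the representability side of regularity: by the cited proposition $M$ is representable over $\reals$, and the compatible signed cocircuit matrix $\widetilde{\A}_M(\C^*)$ can be taken to have row space equal to the cocircuit space of the oriented matroid, which is exactly the orthogonal complement of the circuit space. Since the circuit space of a rank-$r$ matroid on $n$ elements has dimension $n-r$, its orthogonal complement has dimension $n-(n-r)=r$, bounding $\rank\widetilde{\A}_M(\C^*)\leq r$. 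Combining the two inequalities yields (\ref{eq:rank2}).

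For the circuit matrix (\ref{eq:rank1}), I would argue symmetrically using the fundamental circuits $\ci(B,e)$ for $e\in E\setminus B$, of which there are $n-r$. The submatrix of $\widetilde{\A}_M^B(\C)$ on the $n-r$ columns indexed by $E\setminus B$ is again a signed identity matrix, because the fundamental circuit $\ci(B,e)$ is the unique circuit in $B\cup\{e\}$ containing $e$, so among the non-basis columns only the diagonal entries are nonzero. This gives $\rank\widetilde{\A}_M^B(\C)\geq n-r$, and the orthogonality property $\widetilde{\A}_M(\C)\cdot(\widetilde{\A}_M(\C^*))^T=0$ forces the row space of $\widetilde{\A}_M(\C)$ into the orthogonal complement of the $r$-dimensional row space of $\widetilde{\A}_M(\C^*)$, hence $\rank\widetilde{\A}_M(\C)\leq n-r$. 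The squeeze finishes the proof.

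The main obstacle I anticipate is justifying cleanly that the signed row spaces realize the circuit and cocircuit spaces as genuine orthogonal complements over $\reals$, rather than merely satisfying $\widetilde{\A}_M(\C)\cdot(\widetilde{\A}_M(\C^*))^T=0$; the orthogonality relation alone only gives the inequalities $\rank\widetilde{\A}_M(\C)+\rank\widetilde{\A}_M(\C^*)\leq n$, and one still needs the lower bounds from the identity submatrices to prevent either rank from collapsing. The identity-submatrix argument is precisely what supplies these lower bounds without appealing to the full representation theory, so I expect the proof to hinge on combining the elementary diagonal observation with the orthogonality relation, keeping the dependence on the representability proposition to a minimum.
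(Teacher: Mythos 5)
Your proof is correct, and its decisive step takes a genuinely different route from the paper's. The first half coincides: like the paper, you observe that the fundamental circuit rows restricted to the columns of $E(M)\setminus B$, and the fundamental cocircuit rows restricted to the columns of $B$, form diagonal matrices with entries $\pm 1$, which gives $\rank\,\widetilde{\A}_M^B(\C)=n-r$ and $\rank\,\widetilde{\A}_M^B(\C^*)=r$ and hence lower bounds for the full matrices. For the upper bounds the paper argues by explicit row-space containment: after normalizing so that $\widetilde{\A}_M^B(\C)=[E_{n-r},\A_2(\C)]$ and $\widetilde{\A}_M^B(\C^*)=[\A_1(\C^*),E_r]$, the orthogonality property forces $\A_2(\C)=-\A_1(\C^*)^T$, and then any row $\alpha$ of $\widetilde{\A}_M(\C^*)$ is shown to equal $(\alpha_{n-r+1},\ldots,\alpha_n)\cdot\widetilde{\A}_M^B(\C^*)$, so the fundamental rows already span. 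Your closing squeeze replaces this computation by the rank inequality: $\widetilde{\A}_M(\C)\cdot(\widetilde{\A}_M(\C^*))^T=0$ places the row space of either matrix inside the kernel of the other, so $\rank\,\widetilde{\A}_M(\C)+\rank\,\widetilde{\A}_M(\C^*)\leq n$, and combined with the lower bounds $n-r$ and $r$ every inequality collapses to equality, proving (\ref{eq:rank1}) and (\ref{eq:rank2}) simultaneously; this is more elementary than the paper's argument, works verbatim over any field over which the signings are compatible, and still yields (by comparing the equal-dimensional row spaces) the spanning fact that the paper reuses later to produce the matrix $D$ in the proof of Theorem \ref{theo:nonsing}, though the paper's version has the small advantage of exhibiting the coefficients of that combination explicitly. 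One caution: your middle paragraph, which tries to bound $\rank\,\widetilde{\A}_M(\C^*)$ by identifying its row space with the orthogonal complement of an $(n-r)$-dimensional circuit space, is both circular (that dimension is essentially the content of (\ref{eq:rank1})) and unjustified for an arbitrary compatible signing, which need not a priori arise from a representation of $M$ over $\reals$; since your final paragraph correctly discards this in favor of the squeeze, the proof stands, but that paragraph should be deleted rather than repaired.
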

\begin{proof}
Throughout the proof we assume that $B=\{e_{n-r+1},\ldots,e_{n-1},e_n\}$ and that the columns of each of the four considered matrices are ordered according to $e_1,\ldots ,e_n$. Furthermore, we order the rows of $\widetilde{\A}_M(\C)$ such as $\widetilde{\A}_M^B(\C)$ and $\widetilde{\A}_M(\C^*)$ such as $\widetilde{\A}_M^B(\C^*)$ such that the $i^{th}$ row corresponds to the fundamental circuit $\ci(B,e_{i})$ and the fundamental cocircuit $\coc(B,e_{n-r+i})$ for $1\leq i\leq n-r$ and $1\leq i\leq r$, respectively.

We first compute the rank of $\widetilde{\A}_M^B(\C)$ and $\widetilde{\A}_M^B(\C^*)$. In the chosen orderings the first $n-r$ columns of $\widetilde{\A}_M^B(\C)$ are a diagonal matrix having entries $+1$ and $-1$, thus this submatrix is non-singular, showing that $\rank\;\widetilde{\A}_M^B(\C)=n-r$. Similarly, the last $r$ columns of $\widetilde{\A}_M^B(\C^*)$ are a diagonal matrix having entries $+1$ and $-1$ in the diagonal. Thus, this submatrix is non-singular as well and therefore $\widetilde{\A}_M^B(\C^*)$ is of rank $r$.

Next we show that $\rank\;\widetilde{\A}_M(\C^*)=r$. Note that being $\widetilde{\A}_M^B(\C^*)$ a submatrix of $\widetilde{\A}_M(\C^*)$ implies
\begin{equation}\label{eq:rank}
\rank\; \widetilde{\A}_M(\C^*)\geq \rank \;\widetilde{\A}_M^B(\C^*)=r.
\end{equation}
By assumption $\widetilde{\A}_M^B(\C)$ and $\widetilde{\A}_M^B(\C^*)$ have the orthogonal property. Since multiplying rows with a non-zero scalar preserves this property, we may assume -- using the above discussion -- that
\begin{equation*}
\widetilde{\A}_M^B(\C)=[E_{n-r},\A_{2}(\C)]\qquad \mbox{and} \qquad \widetilde{\A}_M^B(\C^*)=[\A_{1}(\C^*),E_r],
\end{equation*}
where $E_j$ denotes the $(j\times j)$-unit matrix.
The orthogonality property now implies
\begin{equation*}
0=[E_{n-r},\A_{2}(\C)]\cdot \left[
\begin{array}{cc}
\A_{1}(\C^*)^T\\
E_r
\end{array}
\right]
=E_{n-r}\cdot \A_{1}(\C^*)^T+\A_{2}(\C)\cdot E_r,
\end{equation*}
i.e., 
\begin{equation}\label{eq:orth}
\A_{2}(\C)=-\A_{1}(\C^*)^T.
\end{equation}
Let $\alpha=(\alpha_1,\ldots,\alpha_{n-r},\alpha_{n-r+1},\ldots,\alpha_n)$ be any row vector of $\widetilde{\A}_M(\C^*)$. The claim follows if we show that $\alpha$ can be written as a linear combination of the rows of $\widetilde{\A}_M^B(\C^*)$. By the orthogonality property it holds that
\begin{equation*}
0=\alpha\cdot (\widetilde{\A}_M^B(\C))^T=(\alpha_1,\ldots ,\alpha_{n-r},\alpha_{n-r+1},\ldots,\alpha_{n})\cdot\left[
\begin{array}{cc}
E_{n-r}\\
\A_{2}(\C)^T
\end{array}
\right].
\end{equation*}
Using (\ref{eq:orth}) we deduce that 
\begin{align*}
(\alpha_{1},\ldots,\alpha_{n-r})&=-(\alpha_{n-r+1},\ldots,\alpha_n)\cdot \A_{2}(\C)^T\\
&=(\alpha_{n-r+1},\ldots,\alpha_n)\cdot \A_{1}(\C^*).
\end{align*}
We now conclude that 
\begin{equation*}
\alpha=(\alpha_{n-r+1},\ldots,\alpha_n)\cdot [\A_{1}(\C^*),E_r]=(\alpha_{n-r+1},\ldots,\alpha_n)\cdot\widetilde{\A}_M^B(\C^*).
\end{equation*}
Thus, any row vector of $\widetilde{\A}_M(\C^*)$ is given as a linear combination of the rows of $\widetilde{\A}_M^B(\C^*)$ and hence
\begin{equation*}
\rank\; \widetilde{\A}_M(\C^*)\leq \rank\; \widetilde{\A}_M^B(\C^*)=r.
\end{equation*}
This finishes the proof of Equation (\ref{eq:rank2}). The second part of Equation (\ref{eq:rank1}) follows similarly.
\end{proof}
In the above proof the computation of the rank of the fundamental circuit and cocircuit matrices did not use the orthogonality property. Thus, this part remains true for all signings of those matrices.

The next theorem which is very much in the flavor of Theorem 6.10. in \cite{Thulasiraman} provides a criterion for certain submatrices of an admissible signed cocircuit matrix of $M$ to be non-singular.
\begin{theorem}\label{theo:nonsing}
Let $M$ be a regular matroid on ground set $E(M)=\{e_1,\ldots,e_n\}$ of rank $r$. Let $\widetilde{\A}_M(\C^*)$ be an admissible signed cocircuit incidence matrix of $M$ and let $\A$ be an $(r\times n)$-submatrix of $\widetilde{\A}_M(\C^*)$ having full rank $r$. Then:\\
A square submatrix of $\A$ of size $(r\times r)$ is non-singular if and only if the elements of $M$ corresponding to the columns of the submatrix of $\A$ form a basis of $M$.
\end{theorem}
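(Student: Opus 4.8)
The plan is to recognize the statement as the assertion that $\A$ is \emph{itself} a representation of $M$ over $\mathbb{R}$. Indeed, if $P$ is any $(r\times n)$-matrix of rank $r$ over $\mathbb{R}$, then by the definition of the represented matroid $M[P]$ a set $S$ of $r$ columns spans a nonsingular $(r\times r)$-submatrix exactly when those columns are linearly independent, i.e. exactly when $S$ is a basis of $M[P]$. Moreover, two full-row-rank $(r\times n)$-matrices with the same row space represent the same matroid, since one is obtained from the other by left multiplication with an invertible $(r\times r)$-matrix $G$, and $G$ scales every maximal minor by the nonzero factor $\det G$. Hence the theorem follows once we show $M[\A]=M$, equivalently that the row space of $\A$ equals the row space of a totally unimodular representation $A$ of $M$ over $\mathbb{R}$, which exists because $M$ is regular.

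First I would reduce the statement about $\A$ to one about the whole cocircuit matrix. Since $\A$ consists of $r$ linearly independent rows of $\widetilde{\A}_M(\C^*)$ and $\rank\widetilde{\A}_M(\C^*)=r$ by Proposition \ref{prop:rank}, the rows of $\A$ already span the full row space of $\widetilde{\A}_M(\C^*)$; call this space $V^*$. Because the signing is admissible, there is a compatible signed circuit matrix $\widetilde{\A}_M(\C)$ with $\widetilde{\A}_M(\C)\cdot\widetilde{\A}_M(\C^*)^{T}=0$; let $V$ be its row space, so $\dim V=n-r$ again by Proposition \ref{prop:rank}. The orthogonality property gives $V\perp V^*$, and as $\dim V+\dim V^*=n$ the two spaces are orthogonal complements, $V^*=V^{\perp}$. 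Writing $W:=\ker A$ for the circuit space of $M$ (of dimension $n-r$) and noting that its orthogonal complement is the row space of $A$, it therefore suffices to prove the single identity $V=W$.

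The heart of the argument, and the step I expect to be the main obstacle, is this identity $V=W$. Every row of $\widetilde{\A}_M(\C)$ is a $\{-1,0,1\}$-vector supported exactly on a circuit $C$ of $M$, while total unimodularity provides, for each such $C$, a vector of $\ker A$ with $\pm1$ entries on $C$ and zeros elsewhere that is unique up to a global sign. Thus each signed circuit agrees with this elementary null vector on its support up to a sign pattern, and the containment $V\subseteq W$ — whence $V=W$ by equality of dimensions — amounts to showing that this sign pattern is globally constant along each circuit. I would control these signs by appealing to regularity: over $\mathbb{R}$ a compatible signing of the circuit and cocircuit matrices is unique up to multiplying rows and columns by $\pm1$, which is the linear-algebraic shadow of the fact that a circuit and a cocircuit of a regular (indeed binary) matroid always meet in an even number of elements. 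Concretely, one fixes the canonical compatible signing read off from a totally unimodular representation $A$, whose fundamental cocircuit matrix with respect to the basis $B$ is, after scaling its rows, the matrix $A$ itself; for this signing the row space of $\widetilde{\A}_M(\C^*)$ is literally the row space of $A$, so $V^*=W^{\perp}=\text{row space of }A$ is immediate, and then one transports the conclusion to an arbitrary admissible signing.

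Finally I would record why this transport is harmless. Multiplying a row of $\widetilde{\A}_M(\C^*)$ by $-1$ scales every $(r\times r)$-minor by $-1$, while multiplying a column by $-1$ (a reorientation) scales minors by $-1$ and leaves $M$ unchanged, since rescaling columns of a representation preserves linear (in)dependence. Hence neither operation alters which $(r\times r)$-submatrices are nonsingular nor which column sets are bases of $M$, so the equivalence established for the canonical signing persists for every admissible signing. Combined with the reductions above this yields $M[\A]=M[A]=M$, and therefore the stated criterion: an $(r\times r)$-submatrix of $\A$ on a column set $S$ is nonsingular if and only if $S\in\B(M)$.
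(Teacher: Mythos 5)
Your argument funnels through one claim that you assert but never prove: that a compatible signing of the circuit and cocircuit incidence matrices of a regular matroid is unique up to multiplying rows and columns by $\pm 1$. This is the entire content of your transport step, and it is a genuinely hard fact --- essentially the Bland--Las Vergnas theorem that a regular (binary) matroid has a unique orientation up to reorientation, together with the non-obvious verification that an admissible signing in this paper's sense (real orthogonality of all circuit rows against all cocircuit rows) actually constitutes an orientation. The justification you offer --- that a circuit and a cocircuit of a binary matroid meet in an even number of elements --- concerns only the \emph{unsigned} incidence structure; it says that each intersection must carry equally many $+$ and $-$ products, but it does nothing to rule out sign patterns that satisfy this constraint pairwise without coming from any single reorientation of a fixed totally unimodular representation. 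Note also that your stated reduction, ``it suffices to prove the single identity $V=W$,'' is literally false: for $M=U(1,2)$ with totally unimodular representation $A=(1\;\;1)$, the signing $\beta=(1,1)$, $q=(1,-1)$ is compatible, yet $V=\mathrm{span}\{(1,1)\}\neq \ker A=\mathrm{span}\{(1,-1)\}$. What is true is $V=\ker A'$ for some column-resigned $A'$, which is exactly what your final paragraph is meant to supply --- but that paragraph consumes the unproven uniqueness claim, so the gap is not closed.

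For contrast, the paper avoids any comparison with an external representation and works entirely with the given signing. For necessity, Proposition \ref{prop:rank} shows the fundamental cocircuit matrix $\widetilde{\A}_M^B(\C^*)$ with respect to the basis $B$ under consideration already spans the full row space, so $\A=D\cdot\widetilde{\A}_M^B(\C^*)$ for an invertible $(r\times r)$-matrix $D$; normalizing $\widetilde{\A}_M^B(\C^*)=[\widetilde{\A}_1(\C^*),E_r]$ makes the $B$-columns of $\A$ equal to $D$, hence nonsingular. For sufficiency, a circuit $C$ contained in the chosen column set gives a row $\beta$ of the compatible signed circuit matrix vanishing outside those columns, and orthogonality $\beta\cdot[\A_1,\A_2]^T=0$ produces a dependence among the columns of $\A_1$. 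Your opening reduction (the rows of $\A$ span the row space of $\widetilde{\A}_M(\C^*)$, and full-row-rank matrices with equal row spaces represent the same matroid) is sound and compatible with this; but to finish you must either import and verify the orientation-uniqueness theorem, or replace the transport step with a direct argument of the paper's kind --- at which point the detour through the canonical signing of a totally unimodular representation buys nothing.
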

\begin{proof}
We first show the necessity part. Assume that $B=\{e_{n-r+1},\ldots,e_n\}\in \B(M)$ is a basis of $M$ and let the columns of $\A$ be labelled such that the $i^{th}$ column corresponds to $e_i$. If we write
\begin{equation*}
\A=[\A_{1}, \A_{2}]
\end{equation*}
-- where the labels of the columns of $\A_2$ correspond to $B$ -- we need to show that $\A_2$ is non-singular. Consider the fundamental cocircuit matrix $\A_M^{B}(\C^*)$ w.r.t. $B$.
Since $\A$ is a submatrix of $\widetilde{\A}_M(\C^*)$ it follows from Proposition \ref{prop:rank} that 
there exists an $(r\times r)$-matrix $D$ such that 
\begin{equation} \label{eq:submatrix}
\A=[\A_1,\A_2]=D\cdot\widetilde{\A}_M^{B}(\C^*).
\end{equation}
The same arguments as in the proof of Proposition \ref{prop:rank} show that we may assume that
\begin{equation*}
\widetilde{\A}_M^{B}(\C^*)=[\widetilde{\A}_1(\C^*),E_r].
\end{equation*}
Equation (\ref{eq:submatrix}) now implies that
\begin{equation*}
\A=[\A_1,\A_2]=D\cdot[\widetilde{\A}_1(\C^*),E_r]=[D\cdot\widetilde{\A}_1(\C^*),D]
\end{equation*}
and in particular $\A_2=D$.
Since both, $\A$ and $\A_M^{B}(\C^*)$, have maximal rank, $D$, i.e, $\A_2$ has to be non-singular as well. This shows the claim.

We now show the sufficiency part. Let $\A_1$ be a non-singular $(r\times r)$-submatrix of $\A$. Let the columns of $\A$ be rearranged such that $\A=[\A_1,\A_2]$. Without loss of generality let $\widetilde{E}=\{e_1,\ldots,e_r\}$ be the elements of $M$ corresponding to the columns of $\A_1$. We show that $\widetilde{E}$ does not contain a circuit of $M$. Assume for the contrary that there exists a circuit $C\in \C(M)$ such that $C\subseteq\widetilde{E}$. If $\beta=(\beta_1,\ldots,\beta_n)$ is the row -- labelled  $C$ -- of the signed circuit incidence matrix of $M$, compatible with $\widetilde{\A}_M(\C^*)$ it must hold that $\beta_{r+1}=\cdots =\beta_n=0$. The orthogonality property implies
\begin{align*}
0&=\beta\cdot[\A_1,\A_2]^T\\
&=(\beta_1,\ldots,\beta_r)\cdot \A_1^T+(\beta_{r+1},\ldots,\beta_n)\cdot \A_2^T\\
&=(\beta_1,\ldots,\beta_r)\cdot \A_1^T.
\end{align*}
This yields a linear dependence relation between the columns of $\A_1$ which contradicts the assumption. Thus, $\widetilde{E}$ has to be an independent set of $M$ and $\rank(M)=r=|\widetilde{E}|$ implies that it is a basis.
\end{proof}

\subsection{Linear systems of parameters for regular matroids}
In Section \ref{subsect:SimplicialComplexes} we have introduced the notion of a l.s.o.p. for the Stanley-Reisner of a simplicial complex. For practical purposes it is often easier to use the following characterization of those systems, due to Stanley \cite{StanleyCM}.
\begin{proposition}\label{prop:lsop}
Suppose $\Delta$ is a $(d-1)$-dimensional simplicial complex and
\begin{equation*}
\theta_1=\sum_{i=1}^n a_{1i}x_i,\qquad\ldots,\qquad\theta_d=\sum_{i=1}^n a_{di}x_i
\end{equation*}
are homogeneous elements of degree $1$ in $k[\Delta]$. Let $A=(a_{ij})\in k^{d\times n}$ and for a facet $F\in \Delta$ let $A_F$ be the $(d\times d)$-submatrix of $A$ whose columns correspond to the vertices of $F$. Then $\{\theta_1,\ldots,\theta_d\}$ is an l.s.o.p. for $k[\Delta]$ if and only if for every facet $F$ of $\Delta$ the matrix $A_F$ has rank $d$.
\end{proposition}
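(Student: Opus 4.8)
The plan is to reduce the l.s.o.p.\ condition to a statement about the Krull dimension of the quotient ring and then compute that dimension facet by facet. Recall that $\Theta=\{\theta_1,\dots,\theta_d\}$ is an l.s.o.p.\ for $k[\Delta]$ precisely when $k[\Delta]$ is a finitely generated module over $k[\theta_1,\dots,\theta_d]$, which for a standard graded $k$-algebra of Krull dimension $d$ is equivalent to $\dim k[\Delta]/(\Theta)=0$; that is, the $d$ linear forms must drop the dimension from $\dim k[\Delta]=d$ all the way down to $0$. So the first step is to replace the module-finiteness condition by the numerical condition $\dim k[x_1,\dots,x_n]/(I_\Delta+\Theta)=0$.

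Next I would exploit the prime decomposition of the Stanley--Reisner ideal. It is standard (see, e.g., \cite{BH-book}) that the minimal primes of $I_\Delta$ are exactly the ideals $P_F=\langle x_i\mid i\notin F\rangle$, one for each facet $F$ of $\Delta$, that $I_\Delta=\bigcap_{F}P_F$, and that $k[x_1,\dots,x_n]/P_F\cong k[x_i\mid i\in F]$. Adding $\Theta$ and passing to radicals gives $\sqrt{I_\Delta+\Theta}=\bigcap_{F}\sqrt{P_F+\Theta}$, because $V(I_\Delta+\Theta)=V(I_\Delta)\cap V(\Theta)=\bigcup_F\bigl(V(P_F)\cap V(\Theta)\bigr)$. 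Since the Krull dimension of a quotient by a finite intersection of ideals is the maximum of the dimensions of the individual quotients, this yields
\begin{equation*}
\dim k[\Delta]/(\Theta)=\max_{F\text{ facet}}\dim k[x_1,\dots,x_n]/(P_F+\Theta).
\end{equation*}

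It then remains to evaluate each term. Under the isomorphism $k[x_1,\dots,x_n]/P_F\cong k[x_i\mid i\in F]$ the form $\theta_j$ maps to the linear form $\sum_{i\in F}a_{ji}x_i$, whose coefficients are precisely the $j$-th row of $A_F$. Hence $k[x_1,\dots,x_n]/(P_F+\Theta)$ is a polynomial ring modulo $d$ linear forms whose coefficient matrix is $A_F$, so it is isomorphic to a polynomial ring in $|F|-\rank A_F$ variables and has dimension $|F|-\rank A_F$. As $\Delta$ is pure of dimension $d-1$, every facet satisfies $|F|=d$ and $A_F$ is the $(d\times d)$-matrix of the statement, giving $\dim k[x_1,\dots,x_n]/(P_F+\Theta)=d-\rank A_F\ge 0$. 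The maximum over all facets is therefore $0$ if and only if $\rank A_F=d$, i.e.\ $A_F$ is non-singular, for every facet $F$, which is exactly the claimed criterion.

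The routine algebraic bookkeeping (the dimension formula for the restricted forms) is straightforward; I expect the main point requiring care to be the passage $\dim k[\Delta]/(\Theta)=\max_F\dim k[x_1,\dots,x_n]/(P_F+\Theta)$. This rests on the facet decomposition $I_\Delta=\bigcap_F P_F$ together with the fact that dimension is computed on minimal primes and is insensitive to radicals, so that intersecting the coordinate subspaces $V(P_F)$ with the linear subspace $V(\Theta)$ and taking the top-dimensional piece controls the whole dimension. An equivalent and perhaps more transparent route is to base change to the algebraic closure of $k$ -- which changes neither the l.s.o.p.\ property nor any $\rank A_F$ -- and read off $\dim k[\Delta]/(\Theta)=\dim\bigl(\bigcup_F(L_F\cap V(\Theta))\bigr)$ geometrically, where $L_F$ denotes the coordinate subspace indexed by $F$.
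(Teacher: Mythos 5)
Your argument is correct. The paper does not actually prove this proposition --- it is quoted from Stanley without proof --- and your route (replacing the l.s.o.p.\ condition by $\dim k[\Delta]/(\Theta)=0$ via graded Noether normalization, then using the minimal-prime decomposition $I_\Delta=\bigcap_F P_F$ with $P_F=\langle x_i \mid i\notin F\rangle$ to reduce to the facet-by-facet count $\dim k[x_i \mid i\in F]/(\theta_1|_F,\ldots,\theta_d|_F)=|F|-\rank A_F$) is precisely the standard proof of this criterion, with the key step, that any prime containing $I_\Delta+\Theta$ contains some $P_F+\Theta$, handled correctly. One minor remark: the purity of $\Delta$ that you invoke at the end is not an explicit hypothesis but is implicit in the proposition's definition of $A_F$ as a $(d\times d)$-submatrix, and it holds for the complexes to which the paper applies the result; for a non-pure complex the correct condition would be $\rank A_F=|F|$ for every facet $F$, which your argument in fact establishes along the way.
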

For the independence complex of a matroid, Brown, Colbourn and Wagner \cite{BrownColbournWagner}
reformulated this criterion the following way.
\begin{corollary}\label{cor:lsop}
Let $M=(E(M),\I(M))$ be a matroid of rank $r$ and $E(M)=\{e_1,\ldots,e_n\}$. Let 
\begin{equation*}
\theta_1=\sum_{i=1}^n a_{1i}x_i,\qquad\ldots,\qquad\theta_r=\sum_{i=1}^n a_{ri}x_i
\end{equation*}
be homogeneous elements of degree $1$ in $k[\I(M)]$. If the map
\begin{equation*}
\Phi:\;E(M)\;\rightarrow \; k^r:\;e_i\;\mapsto (a_{1i},\ldots,a_{ri})^T
\end{equation*}
is a representation of $M$ over $k$ then $\{\theta_1,\ldots,\theta_r\}$ is an l.s.o.p. for $k[\I(M)]$.
\end{corollary}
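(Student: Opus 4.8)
The plan is to verify the rank criterion of Proposition \ref{prop:lsop} directly, using nothing more than the defining property of a representation. First I would pin down the dimension parameter: the facets of the independence complex $\I(M)$ are exactly its maximal independent sets, i.e. the bases of $M$, and all of these have cardinality equal to the rank $r$. Hence $\I(M)$ is pure of dimension $r-1$, so that $d=r$ in the notation of Proposition \ref{prop:lsop}, and the number $r$ of linear forms $\theta_1,\ldots,\theta_r$ matches $d$ as required.

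Next I would make the identification of matrices explicit. The coefficient matrix $A=(a_{ji})\in k^{r\times n}$ attached to $\theta_1,\ldots,\theta_r$ has as its $i$-th column precisely the vector $(a_{1i},\ldots,a_{ri})^T=\Phi(e_i)$; in other words $A$ is nothing but the matrix of the representation $\Phi$, its columns being the images of the ground set elements. Consequently, for a facet $B$ of $\I(M)$ --- that is, a basis of $M$ --- the submatrix $A_B$ appearing in Proposition \ref{prop:lsop} is exactly the $(r\times r)$-matrix whose columns are $\{\Phi(e)~|~e\in B\}$.

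It then remains to check that each such $A_B$ has rank $d=r$. Since $B$ is a basis it is in particular an independent set of $M$, and because $\Phi$ is a representation, $B\in\I(M)$ forces the family of columns $\{\Phi(e)~|~e\in B\}$ to be linearly independent over $k$. As $|B|=r$, these are $r$ linearly independent vectors in $k^r$, so $A_B$ is non-singular and hence of rank $r$. Since this holds for every facet $B$, the sufficiency direction of Proposition \ref{prop:lsop} immediately yields that $\{\theta_1,\ldots,\theta_r\}$ is an l.s.o.p. for $k[\I(M)]$.

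This argument is essentially a direct translation of the representation property through Stanley's criterion, so I do not anticipate a genuine obstacle. The only points deserving a moment's care are the matching of the dimension $d$ with the rank $r$ (which is why the target of $\Phi$ must be $k^r$ rather than some larger $k^m$, so that independence of $\Phi(B)$ upgrades to non-singularity of the \emph{square} matrix $A_B$) and the harmless bookkeeping that a loop $e$ of $M$ contributes a zero column of $A$, equivalently $x_e=0$ in $k[\I(M)]$; this causes no trouble since loops lie in no basis and therefore in no facet submatrix $A_B$.
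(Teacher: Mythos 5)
Your proof is correct and is exactly the argument the paper intends: the corollary is stated as a reformulation of Stanley's criterion (Proposition \ref{prop:lsop}), and your verification --- $\I(M)$ is pure of dimension $r-1$ so $d=r$, the coefficient matrix of the $\theta_j$ is the representation matrix, and for each facet $B$ (a basis of $M$) the square submatrix $A_B$ is non-singular because $\Phi(B)$ is an independent set of $r$ vectors in $k^r$ --- is precisely how the paper itself deploys the corollary in the proof of Theorem \ref{th:lsop}. Your side remarks on why the target must be $k^r$ and on loops giving zero columns are accurate and harmless.
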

Brown, Colbourn and Wagner \cite{BrownColbournWagner} as well as Brown \cite{Brown} used this criterion to construct a l.s.o.p. for cographic matroids and the independence as well as the NBC complex of a graph, respectively. Our construction of a l.s.o.p. for the independence as well as NBC complex of a matroid closely follows those lines and uses similar ideas.

In the sequel, we consider the following setup. Let $M=(E,\I)$ be a regular matroid of rank $r$.
Let $B\in\B(M)$ be a basis and $\A_M^B(\C^*)=(a_{ij}^*)$ be an admissible signed fundamental cocircuit matrix w.r.t. $B$.
Assume that $E=\{e_1,\ldots,e_n\}$ and $B=\{e_{n-r+1},\ldots,e_n\}$. Let columns and rows of $\A_M^B(\C^*)$ be ordered accordingly.
To each fundamental cocircuit $\coc(B,e_j)$ we associate a linear form
\begin{equation}\label{eq:lsop}
\theta_{e_j}=\sum_{e_i\in \coc(B,e_j)}a^*_{ji} x_i
\end{equation}
in $k[x_1,\ldots,x_n]$.
The result analogous to Theorems 3 in \cite{BrownColbournWagner} and \cite{Brown} is the following.
\begin{theorem} \label{th:lsop}
Let $M=(E,I)$ be a regular matroid of rank $r$ and let $B\in \B(M)$ be a basis.
Then the set $\Theta^B=\{\theta_e~|~e\in B\}$ ($\theta_e$ as defined in (\ref{eq:lsop})) is a l.s.o.p. for $k[\I(M)]$ and $k[\NBC(M)]$.
\end{theorem}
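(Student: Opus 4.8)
The plan is to verify the criterion of Proposition \ref{prop:lsop} directly, with essentially all of the hard work already packaged into Theorem \ref{theo:nonsing}. First I would record the dimension bookkeeping: both $\I(M)$ and $\NBC(M)$ are pure of dimension $r-1$, since the facets of $\I(M)$ are precisely the bases of $M$ and the facets of $\NBC(M)$ are exactly those bases that contain no broken circuit, hence a subset of $\B(M)$. Thus $d=r$ in the notation of Proposition \ref{prop:lsop}, and $\Theta^B=\{\theta_e~|~e\in B\}$ contains exactly $r=|B|$ linear forms, as required.

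Next I would identify the coefficient matrix of $\Theta^B$. By definition (\ref{eq:lsop}) the form $\theta_{e_j}$ has coefficient $a^*_{ji}$ in position $i$ precisely when $e_i\in\coc(B,e_j)$, and $0$ otherwise; since $a^*_{ji}=0$ whenever $e_i\notin\coc(B,e_j)$, the $(r\times n)$ matrix $A$ whose rows are these coefficient vectors is exactly the admissible signed fundamental cocircuit matrix $\A_M^B(\C^*)$ (its $r$ rows correspond to the $r$ fundamental cocircuits $\coc(B,e_j)$, $e_j\in B$). By Proposition \ref{prop:rank} this matrix has full rank $r$, so it satisfies the hypotheses of Theorem \ref{theo:nonsing} with $\A=\A_M^B(\C^*)$.

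With these identifications in place the l.s.o.p. criterion collapses to a purely matroidal statement. For a facet $F$ of $\I(M)$ --- that is, a basis $B'$ of $M$ --- the submatrix $A_F$ is the $(r\times r)$ submatrix of $\A_M^B(\C^*)$ whose columns correspond to the elements of $B'$. Theorem \ref{theo:nonsing} asserts that such a submatrix is non-singular precisely because its columns form a basis, so $A_F$ has rank $r$ for every facet $F$ of $\I(M)$; Proposition \ref{prop:lsop} then yields that $\Theta^B$ is an l.s.o.p. for $k[\I(M)]$. For $k[\NBC(M)]$ the conclusion is immediate, since its facets are a subset of the facets of $\I(M)$: the rank condition just verified holds \emph{a fortiori} for every facet of $\NBC(M)$, and a second application of Proposition \ref{prop:lsop} finishes the claim.

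Because the genuinely difficult content --- the equivalence between non-singularity of an $(r\times r)$ cocircuit submatrix and the basis property of its columns --- is already isolated in Theorem \ref{theo:nonsing}, I do not anticipate a serious obstacle here. The only points that require care are bookkeeping ones: confirming that the coefficient matrix of $\Theta^B$ really coincides with $\A_M^B(\C^*)$ (so that Theorem \ref{theo:nonsing} applies), and invoking Proposition \ref{prop:rank} to secure the full-rank hypothesis of that theorem. I note also that the reduction to $2$-connected matroids discussed earlier is not needed for this argument, since both Proposition \ref{prop:rank} and Theorem \ref{theo:nonsing} are stated for arbitrary regular matroids $M$.
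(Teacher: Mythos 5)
Your proof is correct and takes essentially the same approach as the paper's: both establish via Proposition \ref{prop:rank} and Theorem \ref{theo:nonsing} that the coefficient matrix $\widetilde{\A}_M^B(\C^*)$ of $\Theta^B$ has full rank $r$ with every $(r\times r)$-submatrix indexed by a basis non-singular, and both handle $k[\NBC(M)]$ by noting $\dim\NBC(M)=\dim\I(M)=r-1$ and that every facet of $\NBC(M)$ is a facet of $\I(M)$, then applying Proposition \ref{prop:lsop}. The only cosmetic difference is that you invoke Proposition \ref{prop:lsop} directly for $\I(M)$ where the paper routes the same fact through its reformulation Corollary \ref{cor:lsop} (phrasing the matrix condition as ``$\A_M^B(\C^*)$ is a representation of $M$''), and your observation that the $2$-connectedness reduction is not actually needed is consistent with the paper, which assumes it only as a harmless convenience.
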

\begin{proof}
Throughout the proof we may assume that the matroid $M$ is $2$-connected.
In Proposition \ref{prop:rank} we had shown that $\rank\; \A_M^B(C^*)=r$. Theorem \ref{theo:nonsing} states that an $(r\times r)$-submatrix of $ \A_M^B(C^*)$ is non-singular if and only if the elements of $M$ corresponding to the columns of this matrix are a basis of $M$. But this is nothing else than to require that $\A_M^B(C^*)$ is a representation of the matroid $M$. 
Thus, by Corollary \ref{cor:lsop} it follows that $\Theta^B$ is a l.s.o.p. for $k[\I(M)]$.\\
For the second part of the claim, note that $\dim\NBC(M)=\dim \I(M)=r-1$ and observe that every facet of $\NBC(M)$ is a facet of $\I(M)$. Applying Proposition \ref{prop:lsop} first to $\I(M)$ and then to $\NBC(M)$ we conclude that $\Theta^B$ is a l.s.o.p. for $k[\NBC(M)]$.
\end{proof}

\section{Monomial bases for matroids} \label{Sect:NBCbases}
Throughout this section let $M$ be a regular matroid of rank $r$ and let $B\in\B(M)$ be a basis. Further, let $\Theta^B$ be the l.s.o.p. of $k[\NBC(M)]$ defined in (\ref{eq:lsop}) and set $k(\NBC(M))=k[\NBC(M)]/\Theta^B$.\\
Our aim is to find a basis of this quotient ring which has a description in terms of the combinatorics of $M$.
After having defined a candidate for such a basis we introduce \emph{NBC bases} and show a deletion-contraction axiom for their existence.
Our work is inspired by the article \cite{BrownSagan} of Brown and Sagan where NBC bases for NBC complexes of graphs are defined. There it is indicated that an analogous construction could be meaningful for regular matroids. In what follows we carry out exactly this idea.

\subsection{The notion of NBC bases}
We first would like to remark that there exists a (non-monomial) basis of $k(\NBC(M))$ which does not make use of all variables. 
Let $E=\{e_1,\ldots,e_n\}$ of $M$ and assume that $B=\{e_{n-r+1},\ldots,e_n\}$.
For $e_j\in B$ consider the fundamental cocircuit $\coc(B,e_j)$ and the corresponding linear form $\theta_{e_j}\in \Theta^B$.
Since $\theta_{e_j}=0\in k(\NBC(M))$ it follows that
\begin{equation} \label{eq:red}
-a^*_{jj}x_j=\sum_{\substack{e_i\in \coc(B,e_j)\\i\neq j}}a^*_{ji} x_i.
\end{equation}
Since the right-hand side of (\ref{eq:red}) does only involve variables $x_i$ with $i\leq n-r$ we can substitute all variables $x_j\in k(\NBC(M))$ with $n-r+1\leq j \leq n$ by linear forms in $k[x_{1},\ldots,x_{n-r}]$.\\
For each circuit $C$ of $M$ let $p_{C}=p_C(x_1,\ldots,x_{n-r})$ be the polynomial obtained from $\prod_{e_j\in \overline{C}}x_j$ after performing those substitutions. (Recall that we use $\overline{C}$ to denote $C\setminus\{\min C\}$.) We set
\begin{equation*}
J(\NBC(M))=\langle p_C~|~ C\mbox{ is a circuit of }M\rangle.
\end{equation*}
The above discussion directly leads to the following result.
\begin{proposition}\label{prop:basis}
If $M$ is a regular matroid then
\begin{equation*}
I_{\NBC(M)}+(\Theta^{B})= J(\NBC(M)).
\end{equation*}
In particular,
\begin{equation*}
k(\NBC(M))\cong k[x_{1},\ldots,x_{n-r}]/J(\NBC(M)).
\end{equation*}
\end{proposition}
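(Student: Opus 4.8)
The plan is to establish the ideal identity $I_{\NBC(M)} + (\Theta^B) = J(\NBC(M))$ directly, from which the ring isomorphism follows by a standard quotient argument. First I would set up the notation carefully: by assumption $B = \{e_{n-r+1},\ldots,e_n\}$, and the preceding discussion around Equation (\ref{eq:red}) shows that in $k(\NBC(M))$ each variable $x_j$ with $n-r+1 \le j \le n$ equals a linear form $\ell_j \in k[x_1,\ldots,x_{n-r}]$ (here I use crucially that $\coc(B,e_j)$ meets $B$ only in $e_j$, so the right-hand side of (\ref{eq:red}) involves only the free variables $x_1,\ldots,x_{n-r}$, and that the diagonal coefficient $a^*_{jj} = \pm 1$ is invertible). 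I would make this substitution precise by introducing the ring homomorphism $\sigma : k[x_1,\ldots,x_n] \to k[x_1,\ldots,x_{n-r}]$ defined by $\sigma(x_i) = x_i$ for $i \le n-r$ and $\sigma(x_j) = \ell_j$ for $j > n-r$. By construction $\sigma$ fixes $k[x_1,\ldots,x_{n-r}]$, and $p_C = \sigma(\prod_{e_j \in \overline{C}} x_j)$, so that $J(\NBC(M)) = \sigma(I_{\NBC(M)})$.

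Next I would prove the two containments. For $J(\NBC(M)) \subseteq I_{\NBC(M)} + (\Theta^B)$: each generator of the Stanley--Reisner ideal differs from its image under $\sigma$ by an element of the ideal generated by the differences $x_j - \ell_j$, and each such difference is, up to the unit $-a^*_{jj}$, exactly $\theta_{e_j} \in \Theta^B$ by (\ref{eq:red}). Hence $p_C \equiv \prod_{e_j \in \overline{C}} x_j \pmod{(\Theta^B)}$, placing $p_C$ in $I_{\NBC(M)} + (\Theta^B)$. For the reverse containment I would show that both the generators $\prod_{e_j \in \overline{C}} x_j$ of $I_{\NBC(M)}$ and the generators $\theta_{e_j}$ of $(\Theta^B)$ lie in $J(\NBC(M))$: the former because it is congruent to $p_C$ modulo $(\Theta^B)$ and one checks $(\Theta^B) \subseteq J(\NBC(M))$, the latter because $\theta_{e_j} = -a^*_{jj}(x_j - \ell_j)$ and this difference is annihilated by $\sigma$, so it lies in $\ker\sigma$, which I would identify with $J(\NBC(M))$ together with the relations $x_j - \ell_j$.

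The one genuine subtlety — and the step I would treat most carefully — is to verify that the ideals $(\Theta^B)$ and $\ker\sigma$ interact as expected, namely that $(\Theta^B) = (x_{n-r+1} - \ell_{n-r+1}, \ldots, x_n - \ell_n)$ as ideals of $k[x_1,\ldots,x_n]$. This is where the structure of the fundamental cocircuit matrix enters: since $\coc(B,e_j)$ contains $e_j$ with coefficient $a^*_{jj} = \pm 1$ and meets $B$ in no other element, each $\theta_{e_j}$ is (up to a unit) $x_j - \ell_j$ with the $\ell_j$ triangular in the sense that $\ell_j$ involves no variable $x_{j'}$ with $j' > n-r$. This triangularity guarantees that the $\theta_{e_j}$ form a regular sequence whose quotient cleanly eliminates the basis variables, giving $k[x_1,\ldots,x_n]/(\Theta^B) \cong k[x_1,\ldots,x_{n-r}]$ via $\sigma$.

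Finally, the isomorphism statement follows formally. Since $\sigma$ is surjective onto $k[x_1,\ldots,x_{n-r}]$ with kernel $(\Theta^B)$, it descends to an isomorphism $k[x_1,\ldots,x_n]/(\Theta^B) \xrightarrow{\sim} k[x_1,\ldots,x_{n-r}]$. Passing to the further quotient by the image of $I_{\NBC(M)}$ and applying the ideal identity just proved, we obtain
\begin{equation*}
k(\NBC(M)) = k[x_1,\ldots,x_n]/\bigl(I_{\NBC(M)} + (\Theta^B)\bigr) \cong k[x_1,\ldots,x_{n-r}]/\sigma(I_{\NBC(M)}) = k[x_1,\ldots,x_{n-r}]/J(\NBC(M)),
\end{equation*}
as claimed. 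The anticipated main obstacle is purely bookkeeping: keeping the substitution $\sigma$, the congruences modulo $(\Theta^B)$, and the identification of the kernel mutually consistent, rather than any deep matroidal input — all the combinatorial content has already been packaged into (\ref{eq:red}) and the fact that $a^*_{jj}$ is a unit.
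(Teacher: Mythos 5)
Your proposal is correct and takes essentially the same route as the paper, which proves this proposition simply by appealing to the substitution discussion around Equation (\ref{eq:red}); your homomorphism $\sigma$ with $\ker\sigma=(\Theta^B)$ (using that each $\coc(B,e_j)$ meets $B$ only in $e_j$ and $a^*_{jj}=\pm 1$) is exactly the precise form of that discussion, and your final displayed isomorphism is the whole content. One caution: your intermediate claim that $(\Theta^B)\subseteq J(\NBC(M))$ is false if read literally, since the two ideals live in different polynomial rings and $J(\NBC(M))$ contains no linear forms in general --- the stated ideal equality, like the paper's, must be read through $\sigma$, i.e.\ as $I_{\NBC(M)}+(\Theta^B)=\sigma^{-1}\bigl(J(\NBC(M))\bigr)$, which is precisely what your concluding quotient argument establishes, so the slip is harmless.
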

In the following we write $\Mon(n)$ for the set of monomials in $k[x_1,\ldots,x_n]$.
It was shown by Macaulay \cite{Macaulay} that a basis of $k(\NBC(M))$ can be chosen as a lower order ideal of monomials.
Recall that a subset $L\subseteq \Mon(n)$ is a \emph{lower order ideal} if whenever $u\in L$ and $v\in \Mon(n)$ divides $u$, then also $v\in L$.
Similarly, a subset $U\subseteq \Mon(n)$ is an \emph{upper order ideal} if whenever $u\in U$ and $v\in \Mon(n)$ is divisible by $u$ then $v\in U$ as well.
Note, that $U$ is an upper order ideal if and only if $\Mon(n)\setminus U$ is a lower order ideal. If $S\subseteq \Mon(n)$ then the lower and upper order ideals generated by $S$ are
\begin{align*}
L(S)&=\{u\in \Mon(n)~|~u\mbox{ divides } v \mbox{ for some } v\in S\}\;\;\mbox{  and}\\
U(S)&=\{u\in \Mon(n)~|~ u\mbox{ is divisible by } v \mbox{ for some } v\in S\}.
\end{align*}
We now construct a special lower order ideal in $k(\NBC(M))$. As is Section \ref{subsect:CircuitCocircuit} we label the fundamental circuits of $M$ w.r.t. $B$ with $C_1,\ldots,C_{n-r}$, i.e., $C_i=\ci(B,e_{i})$ for $1\leq i\leq n-r$.
The remaining circuits of $M$ are labelled with $C_{n-r+1},\ldots, C_m$.
Following \cite{BrownSagan} we define
\begin{align*}
d_j=
\begin{cases}
j&\mbox{ if } j\leq n-r\\
\min\{i~|~e_i\in \coc(B,e_j)\}&\mbox{ if }j\geq n-r+1.\\
\end{cases}
\end{align*}
Further, let
\begin{align*}
m_{C_i}=
\begin{cases}
x_i^{|\overline{C}_i|} &\mbox{ if } i\leq n-r\\
\prod_{e_j\in \overline{C}_i}x_{d_j} &\mbox{ if } i\geq n-r+1.\\
\end{cases}
\end{align*}
The following lemma states how the polynomials $p_C$ and $m_C$ are related to each other.
\begin{lemma}\label{lem:term}
Let $C$ be a circuit of $M$. Then the monomial $m_C$ occurs (up to sign) as a term in the polynomial $p_C$.
\end{lemma}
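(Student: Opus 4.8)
The plan is to track, for each circuit $C=C_i$, exactly which monomials arise when the substitution (\ref{eq:red}) is applied to $\prod_{e_j\in\overline{C}}x_j$, and to show that $m_C$ is produced with a coefficient that is a product of $\pm 1$'s, hence nonzero. First I would record the shape of the substitution. For $e_j\in B$ (so $j>n-r$) that is not a coloop, equation (\ref{eq:red}) rewrites $x_j$ as $-a^*_{jj}\sum_{e_{i'}\in\coc(B,e_j),\,i'\neq j}a^*_{ji'}x_{i'}$; since $\coc(B,e_j)\subseteq(E\setminus B)\cup\{e_j\}$, every index $i'$ occurring here satisfies $i'\le n-r$, and the coefficient of $x_{i'}$ equals $-a^*_{jj}a^*_{ji'}=\pm 1$ because the entries of a signed cocircuit matrix lie in $\{-1,0,1\}$ and are nonzero precisely on the cocircuit. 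Variables $x_j$ with $j\le n-r$ remain untouched. I would also note that $T_j:=\{\,i'\le n-r : e_{i'}\in\coc(B,e_j)\,\}$ is nonempty whenever $e_j$ lies on a circuit (a coloop lies on no circuit), so $d_j=\min T_j$ is well defined for every $e_j\in\overline{C}$ belonging to $B$.

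For a fundamental circuit $C_i=\ci(B,e_i)$ with $i\le n-r$, I would observe that $C_i\subseteq B\cup\{e_i\}$ and that $e_i$ is its minimal element (all elements of $B$ have index $>n-r\ge i$), so $\overline{C_i}\subseteq B$ and, by Lemma \ref{lem:CoCircuits}, $e_i\in\coc(B,e_j)$ for every $e_j\in\overline{C_i}$. Hence each substituted factor contains $x_i$ with coefficient $\pm 1$, and the only way to obtain $x_i^{|\overline{C_i}|}=m_{C_i}$ from $\prod_{e_j\in\overline{C_i}}x_j$ is to select the $x_i$-term out of every factor; this yields $m_{C_i}$ with coefficient $\prod_{e_j\in\overline{C_i}}(-a^*_{jj}a^*_{ji})=\pm 1$, in particular nonzero.

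For a circuit $C_i$ with $i\ge n-r+1$, I would split $\overline{C_i}=S_0\,\dotcup\,S_1$ into its elements outside and inside $B$. The factors indexed by $S_0$ are forced (they stay $x_j=x_{d_j}$), so the coefficient of $m_{C_i}=\prod_{e_j\in S_0}x_{d_j}\prod_{e_j\in S_1}x_{d_j}$ equals the coefficient of $\prod_{e_j\in S_1}x_{d_j}$ in the expansion of $\prod_{e_j\in S_1}\bigl(\sum_{i'\in T_j}c_{j,i'}x_{i'}\bigr)$, where $c_{j,i'}=\pm 1$. A term of this expansion is indexed by a choice function $\phi$ assigning to each $e_j\in S_1$ an index $\phi(j)\in T_j$, and it produces $m_{C_i}$ exactly when the multiset $\{\phi(j):e_j\in S_1\}$ equals $\{d_j:e_j\in S_1\}$.

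The heart of the argument, and the step I expect to be the main obstacle, is to show that $\phi(j)=d_j$ for all $e_j\in S_1$ is the \emph{only} such choice, so that no cancellation occurs. This I would derive from the minimality $d_j=\min T_j$: since $\phi(j)\ge d_j$ for every admissible choice, equality of the two multisets forces $\sum_j\phi(j)=\sum_j d_j$ with each summand satisfying $\phi(j)-d_j\ge 0$, whence $\phi(j)=d_j$ termwise. Consequently $m_{C_i}$ occurs with coefficient $\prod_{e_j\in S_1}c_{j,d_j}=\pm 1\neq 0$, which completes the proof. (The potential difficulty here is precisely that the assignment $j\mapsto d_j$ need not be injective, so $m_{C_i}$ may have repeated variables and could a priori be assembled in several ways; the pointwise-domination argument above is what rules this out.)
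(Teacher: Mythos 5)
Your proof is correct and takes essentially the same route as the paper's: substitute via (\ref{eq:red}), use Lemma \ref{lem:CoCircuits} for fundamental circuits and the minimality defining $d_j$ for the remaining ones to see that $x_{d_j}$ occurs with coefficient $\pm1$ in every factor, and select it from each factor. The one point you add beyond the paper -- whose proof simply ends with ``The claim follows'' -- is the explicit no-cancellation step (any admissible choice $\phi$ satisfies $\phi(j)\ge d_j$, so equality of the multiset sums forces $\phi(j)=d_j$ pointwise, making the selection unique and the coefficient of $m_C$ equal to $\pm1$); this sharpens the write-up but does not deviate from its method.
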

\begin{proof}
Let $C_i$ be a circuit of $M$. We first consider the case that $C_i$ is a fundamental circuit of $M$, i.e., $i\leq n-r$. In this case $m_{C_i}=x_i^{|\overline{C}_i|}$. In addition, substituting with Equation (\ref{eq:red}) in $x_{\overline{C}_i}:=\prod_{e\in \overline{C}_i}x_e$ gives $p_{C_i}=\prod_{e\in \overline{C}_i}(-\sum_{\substack{e_j\in \coc(B,e)\\e_j\neq e}}a^*_{e,j} x_j)$. Since by Lemma \ref{lem:CoCircuits} it holds that $e_i\in\coc(B,e)$ for $e\in \overline{C}_i$ the monomial $m_{C_i}$ appears as a term in $p_{C_i}$.\\
Let now $i\geq n-r+1$. Then, by definition $m_{C_i}=\prod_{e_j\in \overline{C}_i}x_{d_j}$. Accessorily, 
\begin{align*}
p_{C_i}&=\prod_{\substack{e_j\in \overline{C}_i\\ j\leq n-r}}x_j\cdot \prod_{\substack{e_j\in \overline{C}_i\\ j\geq n-r+1}}(-\sum_{e_l\in \coc(B,e_j)}a^*_{jl}x_l)\\
&=\prod_{\substack{e_j\in \overline{C}_i\\ j\leq n-r}}x_{d_j}\cdot \prod_{\substack{e_j\in \overline{C}_i\\ j\geq n-r+1}}(-x_{d_j}-\sum_{e_l\in \coc(B,e_j)\setminus\{e_{d_j}\}}a^*_{jl}x_l).
\end{align*}
For the last equality we have used that $d_j=j$ for $j\leq n-r$ and that in the sum $\sum_{e_l\in \coc(B,e_j)}a^*_{jl}x_l$ there must in particular occur the minimal element of the cocircuit.
The claim follows.
\end{proof}
In view of the above lemma it seems natural to define the following lower order ideal as a candidate for a monomial basis of $k(\NBC(M))$. We set
\begin{align}
U(M)&=U(m_{\overline{C}}~ |~ C\mbox{ circuit of } M)\qquad \mbox{ and} \notag\\
L(M)&=\Mon(n)\setminus U(M).\label{Eq:LowOrdId}
\end{align}
Note that all definitions made so far depend on the chosen ordering of the ground set $E$ of $M$ as well as on the chosen basis.
This fact is also accounted for by the following definition.
\begin{definition}\label{def:nbc}
A matroid $M$ on ground set $E(M)$ for which there is an ordering of $E(M)$ such that $L(M)$ is a basis for $k(\NBC(M))$ will be referred to as a matroid having a \emph{no broken circuit basis} or \emph{NBC basis}, for short.
\end{definition}
Note that in this context the term NBC basis refers to the set of monomials defined in (\ref{Eq:LowOrdId}) and not as usual -- in the context of matroids -- to a basis of a matroid which does not contain a broken circuit, i.e., to a facet of $\NBC(M)$. We hope that this ambiguity will not cause any confusion.
We conclude this section with a hint to a future direction of research and a question which in view of Lemma \ref{lem:term} seems natural to ask.
\begin{question}
Does there exist a term order $\preceq$ with respect to which the monomials $m_C$ are the standard monomials?
\end{question}

\subsection{Deletion and contraction of a matroid}
The aim of this section is to show a deletion-contraction property for the existence of  NBC bases. 
As in the previous section the employed methods are similar to the ones used in \cite{BrownSagan}.\\
We first recall the definition of the deletion and the contraction of a matroid in an element. Let $M=(E(M),\I(M))$ be a matroid and let $e\in E(M)$. The matroid $M\setminus e=(E(M)\setminus\{e\},\I(M\setminus e))$, where $\I(M\setminus e)=\{I\in I(M)~|~e\notin I\}$ is called the \emph{deletion of $M$ in $e$}. Let further $M/e$ be the matroid on ground set $E(M)\setminus \{e\}$ whose bases are sets $B\subseteq (E(M)\setminus\{e\})$ such that $B\cup\{e\}$ is a basis of $M$. The matroid $M/e$ is called the \emph{contraction of $M$ in $e$}.
If $e$ is neither a loop nor a coloop the $h$-polynomials of the NBC complexes of $M$, $M/e$ and $M\setminus e$ are related in the following way (see e.g., \cite{SwartzMatroid})
\begin{equation*}
h^{\NBC(M)}(t)=h^{\NBC(M\setminus e)}(t)+h^{\NBC(M/e)}(t).
\end{equation*}
Since the NBC complexes of $M$ and $M\setminus e$ are of the same dimension and the one of $M/e$ is of one dimension less this implies that
\begin{equation}\label{eq:h}
h_i^{\NBC(M)}=h_i^{\NBC(M\setminus e)}+h_{i-1}^{\NBC(M/e)}
\end{equation}
for $i\geq 0$. (Here, we set $h_{-1}^{\NBC(M/e)}=0$.)
Even though from this it seems promising to show the existence of an NBC basis by induction over the cardinality of $E(M)$ -- as Brown and Sagan did for graphs -- we encounter some difficulties.
If $B$ is a basis of $M$ and $e\notin B$ then $B\setminus \{e\}$ is a basis of $M\setminus e$ but not of $M/e$. For the contrary if $e\in B$ then $B\setminus \{e\}$ is a basis of $M/e$ but not of $M\setminus e$.
In \cite{BrownSagan} those issues were circumvented by considering only graphs which contain a vertex of degree $2$.
In the situation of matroids -- using the characterization of cocircuits provided by Proposition \ref{prop:cocircuits} -- the right assumption to meet is to require that the matroid has a cocircuit of cardinality $2$. \\
Before we can state and prove our precise result we need to give a preparatory lemma.

\begin{lemma} \label{Lem:BasisExchange}
Let $M=(E(M),\I(M))$ be a matroid and let $\{e,f\}$ be a cocircuit of $M$. Then:
\begin{itemize}
\item[(i)] A set $B\subseteq E(M)$ with $e\in B$ 
is a basis of $M$ if and only if $(B\setminus\{e\})\cup\{f\}$ is a basis of $M$.
\item[(ii)] A set $C^*\subseteq E(M)$ with $e\in C^*$ and $f\notin C^*$ is a cocircuit of $M$ if and only if $(C^*\setminus\{e\})\cup\{f\}$ is a cocircuit of $M$.
\item[(iii)] If $C\subseteq E(M)$ is a circuit of $M$ then $e\in C$ if and only if $f\in C$.
\end{itemize}
\end{lemma}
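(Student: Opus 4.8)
The plan is to prove all three parts using the characterization of cocircuits from Proposition \ref{prop:cocircuits}, namely that cocircuits are the minimal subsets of $E(M)$ meeting every basis, together with the duality between circuits and cocircuits. The unifying observation is that a cocircuit $\{e,f\}$ forces a very rigid interaction with the bases and circuits of $M$: since $\{e,f\}$ meets every basis, no basis can avoid both $e$ and $f$, and since it is \emph{minimal} with this property, neither $\{e\}$ nor $\{f\}$ alone meets every basis. I expect part (i) to be the heart of the argument, with parts (ii) and (iii) following from (i) by duality and by the fundamental circuit–cocircuit orthogonality.

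For part (i), suppose $B$ is a basis with $e\in B$. First I would argue that $f\notin B$: if both $e,f\in B$, then $B\setminus\{e\}$ is an independent set of size $r-1$ meeting the cocircuit $\{e,f\}$ (in $f$), and I want to reach a contradiction with minimality. The cleaner route is to use that $\{e\}$ is not a cocircuit (as $\{e,f\}$ is a minimal one), so by Proposition \ref{prop:cocircuits} there is a basis $B'$ avoiding $e$; such a $B'$ must contain $f$. Now to show $(B\setminus\{e\})\cup\{f\}$ is a basis, I would invoke the exchange property. Since $\{e,f\}$ is a cocircuit, it is a circuit of $M^*$, so by the usual fundamental-cocircuit description, $f\in\coc(B,e)$ precisely when $e\in\ci(B^*,\cdot)$ type relations hold; more directly, the standard fact that for a cocircuit $C^*$ and a basis $B$ with $|C^*\cap B|=1$, swapping the unique element of $C^*\cap B$ for any other element of $C^*$ yields a basis. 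Here $C^*=\{e,f\}$, $C^*\cap B=\{e\}$, so exchanging $e$ for $f$ gives a basis. The converse direction is symmetric, completing (i).

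Part (iii) I would derive from orthogonality: in a regular (indeed any) matroid, every circuit $C$ and every cocircuit $C^*$ satisfy $|C\cap C^*|\neq 1$ — this is the matroid orthogonality property underlying Equation (\ref{eq:comp}). Applying this with $C^*=\{e,f\}$ forces $|C\cap\{e,f\}|\in\{0,2\}$, which is exactly the statement that $e\in C\Leftrightarrow f\in C$. Part (ii) is then the dual statement of (i): cocircuits of $M$ are circuits of $M^*$, and $\{e,f\}$ being a cocircuit of $M$ means $\{e,f\}$ is a circuit of $M^*$; since (iii) applied in $M^*$ says that any cocircuit of $M^*$ (i.e.\ circuit of $M$) meets $\{e,f\}$ in $0$ or $2$ elements, the analogous exchange for cocircuits containing exactly one of $e,f$ follows from running the argument of (i) inside $M^*$, or equally from Lemma \ref{lem:CoCircuits} relating fundamental circuits and cocircuits.

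The main obstacle I anticipate is phrasing the basis-exchange step in part (i) cleanly using only the tools introduced in the excerpt. The paper leans on Proposition \ref{prop:cocircuits} rather than on an explicit cocircuit-exchange axiom, so I would make sure the swap is justified purely through ``minimal set meeting every basis'': the key lemma to isolate is that if $C^*$ is a cocircuit and $B$ a basis with $C^*\cap B=\{e\}$, then for any $f\in C^*\setminus\{e\}$ the set $(B\setminus\{e\})\cup\{f\}$ is again a basis, which I would prove by showing it has the right cardinality and that it still meets every cocircuit, or by passing to $M^*$ where this becomes the familiar fundamental-circuit exchange guaranteed by \cite[Proposition 1.1.6]{Oxley}.
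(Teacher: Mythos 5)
Your treatment of (i) and (iii) is sound and takes a genuinely different route from the paper. For (i), the paper fixes a second basis $\widetilde{B}$ containing $f$ and inducts on the number of elements in which $B$ and $\widetilde{B}$ differ, invoking the exchange lemma \cite[Lemma 1.2.2]{Oxley}; your single-swap argument is more direct: if $f\notin B$ then $\{e,f\}\subseteq (E(M)\setminus B)\cup\{e\}$ forces $\coc(B,e)=\{e,f\}$ by uniqueness of the fundamental cocircuit, so $f\in\coc(B,e)$, hence $e\in\ci(B,f)$ by Lemma \ref{lem:CoCircuits}, and removing $e$ from the unique circuit of $B\cup\{f\}$ shows that $(B\setminus\{e\})\cup\{f\}$ is a basis. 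For (iii), the paper deduces the claim from (i) by extending $C\setminus\{e\}$ to a basis and swapping, while you invoke the fact that $|C\cap C^*|\neq 1$ for any circuit--cocircuit pair; that fact does hold in every matroid (see \cite{Oxley}) and settles (iii) immediately -- but note that Equation (\ref{eq:comp}) only covers regular matroids, whereas the lemma is stated for arbitrary matroids, so you must cite the general orthogonality fact rather than derive it from the signed incidence matrices.

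Two genuine problems remain. First, your argument for ``$f\notin B$'' is a non sequitur: exhibiting some basis $B'$ avoiding $e$ (hence containing $f$) says nothing about the given $B$, and the claim is false in general -- in $U(2,3)$ every $2$-element subset is simultaneously a basis and a cocircuit, so a basis can contain the whole cocircuit $\{e,f\}$, and in that case the biconditional in (i) itself fails for cardinality reasons. Thus $f\notin B$ must be carried as a tacit hypothesis (the paper does this silently, writing $B$ and $\widetilde{B}$ with $e\in B\setminus\widetilde{B}$ and $f\in\widetilde{B}\setminus B$); under that hypothesis your swap is complete. Second, and more seriously, part (ii) is asserted rather than proved, and the duality bookkeeping is wrong: ``(iii) applied in $M^*$'' would require $\{e,f\}$ to be a cocircuit of $M^*$, i.e.\ a circuit of $M$, which is not a hypothesis. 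What you actually know is that $\{e,f\}$ is a circuit of $M^*$, and orthogonality constrains circuit--cocircuit pairs only, so it says nothing about how another cocircuit of $M$ (a second circuit of $M^*$) meets $\{e,f\}$ -- indeed the configuration $e\in C^*$, $f\notin C^*$ that (ii) addresses is perfectly compatible with orthogonality. Likewise ``running the argument of (i) inside $M^*$'' does not type-check, since (i) concerns bases while (ii) needs an exchange for circuits of $M^*$ containing exactly one element of the parallel pair $\{e,f\}$. To close the gap you need an actual argument: either the paper's -- by Proposition \ref{prop:cocircuits} show that $\widetilde{C}=(C^*\setminus\{e\})\cup\{f\}$ meets every basis $B$ (if $f\in B$ this is clear; otherwise $e\in B$, part (i) gives the basis $\widetilde{B}=(B\setminus\{e\})\cup\{f\}$, and any element $v$ of the non-empty intersection $\widetilde{B}\cap C^*$ satisfies $v\neq e$ and $v\neq f$, so $v\in B\cap\widetilde{C}$), followed by a minimality check -- or prove the parallel-element swap in $M^*$ via circuit elimination, or observe that $e$ and $f$ being parallel in $M^*$ makes the transposition exchanging them an automorphism of $M^*$ and hence of $M$.
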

\begin{proof}
We first prove (i). Let $B$ be a basis of $M$ with $e\in B$ and let $\widetilde{B}$ be another basis of $M$ with $f\in \widetilde{B}$. We show the claim by induction on the number of elements in which those two bases differ. First assume that -- except of $e$ and $f$ -- $B$ and $\widetilde{B}$ have all but one element in common, i.e.,
\begin{align*}
B&=\{e_1,\ldots,e_s,u,e\} \qquad \mbox{and}\\
\widetilde{B}&=\{e_1,\ldots,e_s,v,f\}
\end{align*}
for $e_1,\ldots,e_s,u,v\in E(M)$. By Lemma 1.2.2 in \cite{Oxley} there exists an element $x\in (B\setminus\widetilde{B})=\{u,e\}$ such that $(\widetilde{B}\setminus\{v\})\cup\{x\}$ is a basis of $M$. If $x=u$, we obtain 
$(B\setminus\{e\})\cup\{f\}$ as a basis and we are done in this case. If not, a second application of the same Lemma to $(\widetilde{B}\setminus\{v\})\cup\{e\}=\{e_1,\ldots,e_s,e,f\}$ implies that we can exchange $e$ by $u$, thus yielding the desired basis.
If $B$ and $\widetilde{B}$ differ in a larger number of elements, we can exchange elements step by step and conclude by induction.

We now show (ii). Let $C^*\subseteq E(M)$ be a cocircuit of $M$ containing $e$. By Proposition \ref{prop:cocircuits} so as to show that $\widetilde{C}=(C^*\setminus\{e\})\cup\{f\}$ is a cocircuit of $M$ we need to show that its intersection with every basis is non-empty. Let $B$ be a basis of $M$. If $f\in B$ we have $f\in (B\cap \widetilde{C})$. If not, we have $e\in B$ since $\{e,f\}$ is a cocircuit of $M$. (i) now implies that
$\widetilde{B}=(B\setminus\{e\})\cup\{f\}$ is a basis and since $C^*$ is a cocircuit there exists $v\in (\widetilde{B}\cap C^*)$. Since $e\notin \widetilde{B}$ we further have $v\neq e$ and by $f\notin C^*$ it also holds that $v\neq f$. Thus $v\in (B\setminus\{e\})$ and $v\in (C^*\setminus\{e\})\subsetneq \widetilde{C}$. Hence, $v\in (\widetilde{C}\cap B)$. The set $\widetilde{C}$ has to be minimal since otherwise we may apply the arguments just used to a subset of $\widetilde{C}$ and thereby construct a cocircuit of $M$ which is a proper subset of $C^*$. Hence, we arrive at a contradiction. This finishes the proof of (ii).

We now prove (iii). Let $C\subseteq E(M)$ be a circuit of $M$ and let $e\in C$. Assume that $f\notin C$. By the minimality of a circuit we know that $C\setminus\{e\}$ is an independent set in $M$ which can be extended to a basis $B$ of $M$. Since $\{e,f\}$ is a cocircuit it intersects $B$ non-trivially. Since 
$e\in C$ this means that we have $f\in B$. (i) now implies that $(B\setminus\{f\})\cup\{e\}$ is a basis of $M$. Since $C\subseteq (B\setminus\{f\})\cup\{e\}$
this is a contradiction and the claim follows.
\end{proof}
Given a rank $r$ matroid $M=(E(M),\I(M))$ with a cocircuit $\{e,f\}$ we fix an ordering $e_1<\cdots< e_n$ on $E$ such that the last $r$ elements form a basis $B$. Such an ordering will be called a \emph{standard ordering} in the following. Since $\{e,f\}$ is a cocircuit and as such minimal we may assume that $B$ does only contain one of $e$ and $f$, say $e$. Without loss of generality let the ordering of $E(M)$ be such that $e_n=e$ and $e_{n-r}=f$.
By definition of the contraction and deletion and by Lemma \ref{Lem:BasisExchange} (i) the sets $(B\setminus\{e_n\})$ and $(B\setminus\{e_n\})\cup\{e_{n-r}\}$ are bases of $M/e$ and $M\setminus e$, respectively.
Thus the induced orderings $e_1,\ldots , e_{n-1}$ on $M/e_n$ and $M\setminus e_n$ are standard, as well.
\begin{theorem} \label{Th:DelContr}
Let $M=(E(M),\I(M))$ be a regular matroid of rank $r$ with a standard ordering $e_1<\cdots <e_n$. Let further $\{e_n,e_{n-r}\}$ be a cocircuit of $M$.  
If $k(\NBC(M\setminus e_n))$ and $k(\NBC(M/e_n))$ have NBC bases in their induced standard ordering, then so does $k(\NBC(M))$.
\end{theorem}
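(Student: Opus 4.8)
The plan is to prove the set identity
\[
L(M)=L(M\setminus e_n)\ \dotcup\ x_{n-r}\,L(M/e_n),
\]
where $L(M\setminus e_n)$ is viewed as a set of monomials in $x_1,\dots,x_{n-r-1}$, and then to promote it to a statement about bases via a short exact sequence forced by the $h$-vector identity (\ref{eq:h}). For the setup, write $e_n,e_{n-r}$ for the two elements of the cocircuit, with $e_n\in B$ and $e_{n-r}\notin B$. Since $\{e_n,e_{n-r}\}$ is the unique cocircuit contained in $(E\setminus B)\cup\{e_n\}$ through $e_n$, it equals $\coc(B,e_n)$, so $d_n=n-r$ and the relation $\theta_{e_n}=0$ forces $x_n=c\,x_{n-r}$ in $k(\NBC(M))$ for a nonzero scalar $c$. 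Here $M$ and $M/e_n$ share the $n-r$ non-basis elements $e_1,\dots,e_{n-r}$ and their rings live in $k[x_1,\dots,x_{n-r}]$, while $M\setminus e_n$ has non-basis elements $e_1,\dots,e_{n-r-1}$ and $e_{n-r}$ becomes a \emph{coloop}.

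The combinatorial core is to match the invariants $d_j$ and the monomials $m_C$ across the three matroids. For the contraction this is clean: since $\coc(B,e_j)$ avoids $e_n$ for $j\neq n$, it is literally the fundamental cocircuit of $e_j$ in $M/e_n$ with respect to $B\setminus\{e_n\}$, so $d_j^{M}=d_j^{M/e_n}$ for all $j\le n-1$; consequently, for a circuit $C$ \emph{through} $e_n$ (equivalently, by Lemma \ref{Lem:BasisExchange}(iii), through $e_{n-r}$) one computes $m_C^{M}=x_{n-r}\cdot m^{M/e_n}_{C\setminus e_n}$, the factor $x_{n-r}$ arising from $e_n\in\overline C$ via $d_n=n-r$. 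For the deletion the bases differ ($B$ versus $(B\setminus\{e_n\})\cup\{e_{n-r}\}$), so a priori the $d_j$ need not agree; resolving this mismatch is the first obstacle. I would overcome it by noting that, because $e_{n-r}$ is a coloop of $M\setminus e_n$, one has $\coc_{M\setminus e_n}(B',e_j)=\coc(B,e_j)\setminus\{e_{n-r}\}$, and that any element occurring in a circuit of $M\setminus e_n$ is \emph{not} a coloop, which excludes the degenerate case $\coc(B,e_j)=\{e_{n-r},e_j\}$ for precisely those $e_j$ that appear in a broken circuit. This gives $d_j^{M}=d_j^{M\setminus e_n}$ for all relevant indices and hence $m_C^{M}=m_C^{M\setminus e_n}$ for every circuit $C$ avoiding $e_n$.

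With these, every circuit through $e_n$ yields a generator divisible by $x_{n-r}$ and every circuit avoiding $e_n$ yields a generator free of $x_{n-r}$ coinciding with its $M\setminus e_n$ counterpart, so $U(M)\cap\{x_{n-r}\nmid u\}=U(M\setminus e_n)$, that is $L(M)\cap\{x_{n-r}\nmid u\}=L(M\setminus e_n)$. For the part divisible by $x_{n-r}$ I would establish $\{v : x_{n-r}v\in U(M)\}=U(M/e_n)$. The inclusion $\supseteq$ uses the contraction computation together with the fact that every circuit of $M/e_n$ either is a circuit of $M$ or becomes one after adjoining $e_n$. The reverse inclusion is where \emph{non-minimality} intervenes, since a circuit of $M$ need not remain minimal after contracting $e_n$; I would handle this with the elementary divisibility lemma that $C_0\subseteq C$ implies $\overline{C_0}\subseteq\overline C$ and hence $m_{C_0}\mid m_C$ (with respect to the common $d_j$), applied to a circuit $C_0$ of $M/e_n$ contained in $C$ (respectively in $C\setminus e_n$). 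Together these give the displayed set identity.

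Finally, I would conclude algebraically, which avoids the open question raised after Lemma \ref{lem:term}. Consider the degree-preserving surjection $\pi\colon k(\NBC(M))\to k(\NBC(M\setminus e_n))$ sending $x_{n-r}\mapsto 0$, and the degree-one map $\iota\colon k(\NBC(M/e_n))\to k(\NBC(M))$, $g\mapsto x_{n-r}g$. Well-definedness of $\pi$ reduces to $p_C^{M}|_{x_{n-r}=0}\in J(\NBC(M\setminus e_n))$, which equals $p_C^{M\setminus e_n}$ for $C$ avoiding $e_n$ and vanishes for $C$ through $e_n$ (its $e_n$-factor is a multiple of $x_{n-r}$); well-definedness of $\iota$ reduces to $x_{n-r}\,p_{C''}^{M/e_n}\in J(\NBC(M))$, which holds since this is a scalar multiple of $p^{M}_{C''\cup e_n}$ (respectively equals $x_{n-r}\,p^{M}_{C''}$). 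One checks $\mathrm{im}\,\iota=\ker\pi=x_{n-r}\,k(\NBC(M))$, so
\[
0\longrightarrow k(\NBC(M/e_n))[-1]\xrightarrow{\ \iota\ }k(\NBC(M))\xrightarrow{\ \pi\ }k(\NBC(M\setminus e_n))\longrightarrow 0
\]
is right exact; injectivity of $\iota$ — the one point I do not see directly — follows for free from (\ref{eq:h}) and the Cohen--Macaulayness of broken circuit complexes, which force the graded dimensions on both ends to add up. Transporting the hypothesised bases $L(M\setminus e_n)$ and $L(M/e_n)$ through this sequence produces the basis $L(M\setminus e_n)\dotcup x_{n-r}L(M/e_n)$ of $k(\NBC(M))$, which by the set identity is exactly $L(M)$. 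The step I expect to be hardest is the compatibility bookkeeping of the second paragraph: reconciling the invariants $d_j$ and the monomials $m_C$ across the three distinct bases, with the contraction-side non-minimality absorbed by the divisibility lemma.
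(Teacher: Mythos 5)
Your route coincides with the paper's for the combinatorial core: both proofs rest on the decomposition $L(M)=L(M\setminus e_n)\dotcup x_{n-r}L(M/e_n)$ of (\ref{Eq:LowerOrderIdeal})-type sets, proved by matching fundamental cocircuits across the three bases --- your identities $\coc(B_1,e_j)=\coc(B,e_j)\setminus\{e_{n-r}\}$ and $\coc(B_2,e_j)=\coc(B,e_j)$ are exactly the paper's (\ref{eq:cocircuits}) and (\ref{eq:coc}), and your observation that the degenerate case $\coc(B,e_j)=\{e_{n-r},e_j\}$ is harmless because such an $e_j$ is a coloop of $M\setminus e_n$ and so lies in no relevant circuit is a point the paper glosses over. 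Where you genuinely diverge is the conclusion. The paper finishes by showing that $L(M)$ spans $k(\NBC(M))$ (deferring this induction to Brown--Sagan) and then invoking the cardinality count from (\ref{eq:h}); you instead package the same count into the right-exact sequence $0\to k(\NBC(M/e_n))[-1]\xrightarrow{\iota} k(\NBC(M))\xrightarrow{\pi} k(\NBC(M\setminus e_n))\to 0$ and transport the two hypothesised bases. This is sound: well-definedness of $\pi$ and $\iota$ amounts to the paper's identities $p_C=x_{n-r}\widetilde p_{C\setminus\{e_n\}}$ (type I) and $p_C=\widetilde p_C$ (type II); one has $\mathrm{im}\,\iota=\ker\pi=x_{n-r}\,k(\NBC(M))$ because the generators of $J(\NBC(M\setminus e_n))$ are precisely the images of those of $J(\NBC(M))$ under $x_{n-r}\mapsto 0$; and injectivity of $\iota$ follows degreewise from (\ref{eq:h}) together with Cohen--Macaulayness (which the paper also uses, to equate $\sum_i h_i^{\NBC(M)}$ with $\dim_k k(\NBC(M))$), since a surjection between finite-dimensional spaces of equal dimension is injective. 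Your version buys self-containedness --- no appeal to an omitted spanning induction --- and makes the role of (\ref{eq:h}) structural rather than purely numerical.

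One step is wrong as written, however: the ``elementary divisibility lemma.'' While $C_0\subseteq C$ does give $\overline{C_0}\subseteq\overline{C}$, the conclusion $m_{C_0}\mid m_C$ fails when $C_0$ is a \emph{fundamental} circuit of $M/e_n$, because then $m_{C_0}=x_i^{|\overline{C_0}|}$ is by definition a pure power rather than the product $\prod_{e_j\in\overline{C_0}}x_{d_j}$; Lemma \ref{lem:CoCircuits} only yields $e_i\in\coc(B_2,e_j)$, hence $d_j\le i$, for $e_j\in\overline{C_0}$, not $d_j=i$. Fortunately the situation you invoke the lemma for is vacuous, and this is exactly what the paper proves instead: in the presence of the cocircuit $\{e_n,e_{n-r}\}$, the set $C\setminus\{e_n\}$ is a circuit of $M/e_n$ for \emph{every} circuit $C$ of $M$, i.e.\ no non-minimality ever occurs. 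The argument is short: if a circuit $\widetilde C$ of $M/e_n$ satisfied $\widetilde C\subsetneq C\setminus\{e_n\}$, then $\widetilde C\cup\{e_n\}$ would be a circuit of $M$ (minimality of $C$ rules out $\widetilde C$ itself, and it also forces $e_n\notin C$); Lemma \ref{Lem:BasisExchange}(iii) applied to $\widetilde C\cup\{e_n\}$ gives $e_{n-r}\in\widetilde C\subsetneq C$, and then applied to $C$ gives $e_n\in C$, a contradiction. Replace your divisibility patch by this minimality statement (which also guarantees the exact bijection between type I generators $m_C$ of $U(M)$ and the $x_{n-r}\widetilde m_{C\setminus\{e_n\}}$ that your set identity needs) and your proof is complete.
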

The proof of the above theorem closely follows the lines of the corresponding result in \cite{BrownSagan} for graphs. We will therefore only state the differences and emphasize where further arguments are needed. Even though throughout the proof we will assume that we are working over a field of characteristic $2$ this is not necessary. If the field is of arbitrary characteristic one has to take care of the signs appearing in the elements of the l.s.o.p. and one needs to check that signings for $M\setminus e_n$ and $M/e_n$ which are induced by compatible signings of $M$ are compatible. This essentially follows from Lemma \ref{Lem:BasisExchange}.
\begin{proof}
Assume $\chara(k)=2$.
We first show that
\begin{equation} \label{Eq:LowerOrderIdeal}
L(M)=L(M\setminus e_n )\dotcup x_{n-r} L(M/e_n).
\end{equation}
As soon as this is shown we conclude that
\begin{align*}
|L(M)|&=|L(\NBC(M\setminus e_n ))|+|L(\NBC(M/e_n))|\\
&=\sum_{i\geq 0}h_i^{\NBC(M\setminus e_n )}+\sum_{i\geq 0}h_i^{\NBC(M/e_n)}\\
&\stackrel{(\ref{eq:h})}{=}\sum_{i\geq 0}h_i^{\NBC(M)}=\dim_k k(\NBC(M))
\end{align*}
Let $B=\{e_{n-r+1},\ldots,e_n\}$, $B_1=\{e_{n-r},\ldots,e_{n-1}\}$ and $B_2=\{e_{n-r+1},\ldots,e_{n-1}\}$ and let $\Theta^B$, $\Theta^{B_1}$ and $\Theta^{B_2}$ be the l.s.o.p. for $\NBC(M)$, $\NBC(M\setminus e_n)$ and $\NBC(M/e_n)$ defined by (\ref{eq:lsop}), respectively. By Proposition \ref{prop:basis} a basis for $k(\NBC(M\setminus e_n))$ can be chosen inside $\Mon(n-r)$.
First note that the circuits of $M\setminus e_n $ are the circuits of $M$ not containing $e_n$.
Lemma \ref{Lem:BasisExchange} (iii) implies that those are the circuits of $M$ not containing $e_{n-r}$.
Thus, setting $x_{n-r}=0$ in the generators of the Stanley-Reisner ideal of $\NBC(M)$ yields the Stanley-Reisner ideal of $\NBC(M\setminus e_n )$.

Now, we consider the fundamental cocircuits of $M$ and $M\setminus e_n $. 
We claim that for $e_j\in B_1\setminus\{e_{n-r}\}$:
\begin{equation}\label{eq:cocircuits}
\coc(B_1, e_j)=\coc(B,e_j)\setminus\{e_{n-r}\}.
\end{equation}
Consider $\coc(B_1, e_j)$ in $M\setminus\{e_n\}$. Since $e_{n-r}\in B_1$ it holds that $e_{n-r}\notin \coc(B_1, e_j)$. By \cite[3.1.17]{Oxley} either $\coc(B_1, e_j)\cup\{e_n\}$ is a cocircuit of $M$ or $\coc(B_1, e_j)$ itself is a cocircuit of $M$. In the latter case it directly follows from the uniqueness of a fundamental cocircuit that $\coc(B,e_j)=\coc(B_1, e_j)$ and since $e_{n-r}\notin \coc(B,e_j)$ this shows (\ref{eq:cocircuits}) in this case. If $\coc(B_1, e_j)\cup\{e_n\}$ is a cocircuit of $M$ Lemma \ref{Lem:BasisExchange} (ii) implies that $\coc(B_1, e_j)\cup\{e_{n-r}\}$ is a cocircuit of $M$. Since $e_j\in \coc(B_1, e_j)$ and $(\coc(B_1, e_j)\cup\{e_{n-r}\})\subseteq \{e_1,\ldots,e_{n-r}\}$ this cocircuit has to be the fundamental one of $M$ induced by $e_j$ w.r.t. $B$. This finally shows (\ref{eq:cocircuits}).
Thus we get the linear form $\theta_{e_j}$ in $\Theta^{B_1}$ corresponding to $\coc(B_1, e_j)$ by setting $x_{n-r}=0$ in the corresponding linear form in $\Theta^B$ given by $\coc(B, e_j)$.
It is straightforward to verify that $\coc(B_1,e_{n-r})=\{e_{n-r}\}=\coc(B,e_n)\setminus\{e_n\}$ which takes care of the last cocircuit of $M\setminus e$ to be considered.
Together with the aforementioned relation between the Stanley-Reisner ideals of $\NBC(M)$ and $\NBC(M\setminus e_n )$ we get that the generators of $J(\NBC(M\setminus e_n))$ are obtained from those of $J(\NBC(M))$ by setting $x_{n-r}=0$. This implies that the monomials in $U(M\setminus e_n )$ and $L(M\setminus e_n)$ are the monomials in $U(M)$ and $L(M)$, respectively which are not divisible by $x_{n-r}$.

Equation (\ref{Eq:LowerOrderIdeal}) follows if we show that the monomials of $L(M)$ which are divisible by $x_{n-r}$ are given by the monomials in $L(M/e_n)$ multiplied by $x_{n-r}$.
Proposition 3.1.11 in \cite{Oxley}
states that the circuits of $M/e_n$ are the minimal non-empty members of the set
\begin{equation} \label{Eq:Circuits}
\C=\{C\setminus\{e_n\}~|~C \in\C(M)\}.
\end{equation}
If $C$ is a circuit of $M$ then $C\setminus\{e_n\}\in \C$. If the latter one is minimal in $\C$ it is a circuit of $M/e_n$. If not there exists a circuit $\widetilde{C}\subseteq C\setminus\{e_n\}$ of $M/e_n$. Hence, either $\widetilde{C}$ itself or $\widetilde{C}\cup\{e_n\}$ is a circuit of $M$. The former contradicts the minimality of $C$. So,  $\widetilde{C}\cup \{e_n\}$ has to be a circuit of $M$. Again by the minimality of $C$ it holds that $e_n\notin C$. From Lemma \ref{Lem:BasisExchange} (iii) it follows that $e_{n-r}\in (\widetilde{C}\cup\{e_n\})$. This implies $e_{n-r}\in \widetilde{C}\subsetneq C$ and by Lemma \ref{Lem:BasisExchange} (iii) we conclude
$e_n\in C$ which contradicts our assumption.
Thus we have shown that $\C$ is the set of circuits of $M/e_n$.
Following the proof of Theorem 3.2 in \cite{BrownSagan}
we call the circuits of $M$ containing $e_n$ (and $e_{n-r}$) and the corresponding ones in $M/e_n$ \emph{type I} circuits whereas circuits of $M$ not containing $e_n$ (and thus not $e_{n-r}$) and the corresponding ones in $M/e_n$ will be referred to as \emph{type II} circuits.

We now examine the fundamental cocircuits of $M/e_n$ and $M$. We claim that for $e_j\in B_2$ it holds that
\begin{equation} \label{eq:coc}
\coc(B_2,e_j)=\coc(B,e_j).
\end{equation}
Since $e_n\notin \coc(B,e_j)$ it follows from 3.1.17 in \cite{Oxley} that $\coc(B,e_j)$ is a cocircuit of $M/e_n$. From $e_j\in \coc(B,e_j)\subseteq \{e_1,\ldots,e_{n-r}\}\cup\{e_j\}= E(M)\setminus(B_2\cup\{e_n\})\cup\{e_j\}$ we deduce that $\coc(B,e_j)$ is the fundamental cocircuit of $M/e_n$ induced by $e_j$ w.r.t. $B_2$ which shows (\ref{eq:coc}).\\
We use $p_{C}$ and $\widetilde{p}_C$ to denote the generators of $J(\NBC(M))$ and $J(\NBC(M/e_n))$, respectively, and $m_C$ and $\widetilde{m}_C$ to denote the corresponding generators of $U(M)$ and $U(M/e_n)$, respectively. 
The same arguments as in the proof of Theorem 3.2 in \cite{BrownSagan} show that
\begin{align*}
p_{C}=
\begin{cases}
x_{n-r}\widetilde{p}_{C\setminus\{e_n\}} \qquad &\mbox{if $C$ is of type I}\\
\widetilde{p}_{C} \qquad &\mbox{if $C$ is of type II}.
\end{cases}
\end{align*}
Note that we have used for the type I generators that $e_{n-r}+e_n\in \Theta^B$.
In addition it holds that $m_C=x_{n-r}\widetilde{m}_{C\setminus\{e_n\}}$ if $C$ is of type I and $m_C=\widetilde{m}_C$ if $C$ is of type II.\\
Literally the same arguments as in the proof of Theorem 3.2 in \cite{BrownSagan}
show that $x_{n-r}L(M/e_n)$ consists of those monomials in $L(G)$ which are divisible by $x_{n-r}$.\\
This concludes the proof of Equation (\ref{Eq:LowerOrderIdeal}). The statement of the theorem follows if one shows that the monomials is $L(M)$ span the quotient ring $k(\NBC(M))$.
This is done in exactly the same way as in the proof of Theorem 3.2 in \cite{BrownSagan} and is therefore omitted.
\end{proof}

\section{Classes of matroids having an NBC basis} \label{sec:ThetaPhi}
This section is devoted to the study of classes of matroids which admit NBC bases.
Let us first recall the notion of a uniform matroid $U(r,n)$. Let $E=\{e_1,\ldots,e_n\}$ be an $n$-element set and let ${E\choose r}$ denote the $r$-element subsets of $E$.
Then ${E\choose r}$ is the set of bases of a matroid $U(r,n)$ and this one is called the \emph{uniform matroid} of rank $r$ on an $n$-element set. 
\begin{example}\label{Ex:uniform}
Let $M=U(n,n)$ for some $n\in \mathbb{N}$. By definition there does not exist any circuit in $M$, i.e., $M$ has no broken circuit and therefore the NBC complex is the $(n-1)$-dimensional simplex. The Stanley-Reisner ring of $\NBC(M)$ is thus the entire polynomial ring in $n$ variables. And since each element of $[n]$ is a cocircuit of $M$ the distinguished l.s.o.p. is the set of variables. This implies that quotient of the Stanley-Reisner ring is just the field and $U(n,n)$ trivially has an NBC basis, the lower order ideal being just $\{1\}$.
\end{example}

\begin{example}\label{Ex:Circuit}
Let $M=U(n-1,n)$ for some $n\in \mathbb{N}$ and let $E(M)=[n]$. Then, $[n]$ is the only circuit of $M$, hence $\{2,\ldots,n\}$ is the only broken circuit and $I_{\NBC(M)}=\langle x_2\cdot\cdots\cdot x_n\rangle$. The NBC complex is thus the cone over the boundary of an $(n-2)$-simplex with apex $1$. Furthermore, for each $1\leq i<j\leq n$ the set $\{i,j\}$ is a cocircuit of $M$. For simplicity we now assume that we are working over a field of characteristic $2$, otherwise we would need to take care of the signs in the following discussion. If we use $\{2,\ldots,n\}$ as the basis in the construction of the distinguished l.s.o.p. then $\Theta=\{x_1+x_j~|~2\leq j\leq n\}$ is the l.s.o.p. and by substituting we have $x_2\cdot\cdots\cdot x_n=x_1^{n-1}$ in $I_{\NBC(M)}\cup\Theta$. Thus, the lower order ideal we are looking for is $\{1,x_1,x_1^2,\ldots,x_1^{n-2}\}$ and since the $h$-vector of the broken circuit complex is $(1,\ldots,1,0)$ it follows that $U(n-1,n)$ has an NBC basis.
\end{example}
The operation we are primarily concerned with in this section is what is usually referred to as the parallel connection of two matroids with respect to a basepoint. Let $M=(E(M),\I(M))$, $N=(E(N),\I(N))$ be two matroids and let $p\in (E(M)\cap E(N))$. Consider the set 
\begin{equation*}
\C_P=\C(M)\;\cup \;\C(N)\;\cup \;\{(C_1\cup C_2)\setminus\{p\}~|~p\in (C_1\cap C_2),\; C_1\in\C(M),\; C_2\in\C(N)\}.
\end{equation*}
It is straightforward to verify that $\C_P$ is the set of circuits of a matroid $P(M,N;p)$, the so-called \emph{parallel connection of $M$ and $N$ w.r.t. the basepoint $p$}.
Even though different choices of the basepoint in general lead to different matroids we suppress the basepoint from our notation if it is clear which one is used.
Iterating this construction we can define the parallel connection of several (more than $2$) matroids. More precisely, if $M_1,\ldots, M_t$ are matroids and if $p\in E(M_1)\cap\cdots \cap E(M_t)$ then we set $P_1=M_1$ and $P_i=P(P_{i-1},M_i;p)$ for $2\leq i \leq t$ and $P(M_1,\ldots,M_t;p)$ is referred to as the 
\emph{parallel connection of $M_1,\ldots,M_t$ w.r.t. the basepoint $p$}.
The following lemma specifies how the bases of the parallel connection look like.
This is achieved by the following lemma which makes repeated use of Proposition 7.1.13 (ii) in \cite{Oxley}.
\begin{lemma}\label{lem:ParCon}
Let $M=P(M_1,\ldots, M_t)$ where $M_1,\ldots M_t$ are such that $E(M_i) \cap E(M_j) = \{p\}$ for all $i\neq j$ and assume that $p$ is a loop in none of the $M_i$. Then $B \subseteq E(M_1) \cup \cdots \cup E(M_t)$ is a basis of $M$ containing $p$ if and only if $B \cap E(M_i)$ is a basis of $M_i$ containing $p$ for all $i$. Furthermore, $B$ is a basis of $M$ not containing $p$ if and only if for exactly one $i$, the set $B \cap E(M_i)$ is a basis of $M_i$ not containing $p$ and $(B \cap E(M_j)) \cup \{p\}$ is a basis of $M_j$ for all $j \neq i$.
\end{lemma}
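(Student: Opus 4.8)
The statement is a characterization of the bases of an iterated parallel connection, split into a ``$p\in B$'' case and a ``$p\notin B$'' case. The plan is to induct on $t$, using the binary case Proposition 7.1.13(ii) in \cite{Oxley} (the statement alludes to this source) as the base step and the inductive engine. First I would set up the induction: write $P_t=P(P_{t-1},M_t;p)$ where $P_{t-1}=P(M_1,\ldots,M_{t-1})$, and observe that $E(P_{t-1})\cap E(M_t)=\{p\}$ and that $p$ is a loop in neither $P_{t-1}$ (by the inductive hypothesis, since $p$ is a basis point surviving in each factor) nor $M_t$. This lets me apply the binary result to $P_{t-1}$ and $M_t$, and then unfold $P_{t-1}$ by the inductive hypothesis.

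For the first claim ($p\in B$), I would argue both directions through the binary case. If $B$ is a basis of $P_t$ with $p\in B$, then by the binary characterization $B\cap E(P_{t-1})$ is a basis of $P_{t-1}$ containing $p$ and $B\cap E(M_t)$ is a basis of $M_t$ containing $p$; applying the inductive hypothesis to $B\cap E(P_{t-1})$ then gives that $B\cap E(M_i)$ is a basis of $M_i$ containing $p$ for all $i\le t-1$, and combined with the $M_t$ statement this yields the claim for all $i$. Since all the intersections agree on the single shared element $p$, these pieces glue back to a basis, giving the converse by reversing each equivalence. The ``disjoint except at $p$'' hypothesis $E(M_i)\cap E(M_j)=\{p\}$ is what guarantees that specifying $B\cap E(M_i)$ for each $i$ determines $B$ consistently.

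For the second claim ($p\notin B$), the binary case says $B$ is a basis of $P_t$ without $p$ iff exactly one of the two factors $P_{t-1}, M_t$ contributes a basis missing $p$ while the other contributes, after adjoining $p$, a basis. The two sub-cases are: (a) the ``defective'' factor is $M_t$, so $B\cap E(M_t)$ is a basis of $M_t$ not containing $p$ and $(B\cap E(P_{t-1}))\cup\{p\}$ is a basis of $P_{t-1}$; here I apply the \emph{first} (just-proved) claim to $(B\cap E(P_{t-1}))\cup\{p\}$ to distribute $p$ across all of $M_1,\ldots,M_{t-1}$, so the unique ``bad'' index is $i=t$. Or (b) the defective factor is $P_{t-1}$, so $(B\cap E(M_t))\cup\{p\}$ is a basis of $M_t$ and $B\cap E(P_{t-1})$ is a basis of $P_{t-1}$ not containing $p$; here I apply the \emph{inductive hypothesis} (second claim) to $B\cap E(P_{t-1})$ to locate the unique bad index $i\le t-1$ among the first $t-1$ matroids. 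In both sub-cases exactly one index $i\in[t]$ is defective, matching the statement; the converse again follows by tracing each equivalence backwards.

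The main obstacle I anticipate is purely bookkeeping rather than conceptual: verifying that the ``exactly one $i$'' condition is preserved correctly when unfolding the recursion, i.e.\ that the single defective factor in the binary split corresponds to a single defective $M_i$ after expanding $P_{t-1}$, and that no double-counting occurs at the shared point $p$. Care is needed to check that $p$ is a loop in none of the intermediate parallel connections $P_{i}$, so that the binary lemma's hypotheses remain valid at each stage; this should follow inductively from $p$ being a nonloop in each $M_i$ together with the basis characterization itself. Given these checks, the induction closes cleanly.
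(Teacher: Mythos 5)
Your proposal is correct and takes essentially the same route as the paper: both proofs induct on $t$, using Proposition 7.1.13(ii) of \cite{Oxley} for the base case $t=2$, writing $M=P(P_{t-1},M_t;p)$ and combining the binary characterization with the induction hypothesis. In fact you spell out the second claim (the $p\notin B$ case, with its two sub-cases according to which factor is defective, plus the check that $p$ stays a nonloop in the intermediate connections) in more detail than the paper, which merely remarks that this part ``follows by a similar reasoning.''
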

\begin{proof}
We show the claim by induction over the number of components of $M$. If $t=2$ the claim directly follows from Proposition 7.1.13 (ii) in \cite{Oxley}. If $t\geq 3$ set $P_{t-1}=P(M_1,\ldots, M_{t-1})$. Then $M=P(P_{t-1},M_t)$ and by the base of the induction $B$ is a basis of $M$ containing $p$ if and only if $B\cap (E(M_1)\cup\cdots\cup E(M_{t-1}))$ is a basis of $P_{t-1}$ containing $p$ and $B\cap E(M_t)$ is a basis of $M_t$ containing $p$.
By the induction hypothesis the former condition is equivalent to $p\in B\cap E(M_i)$ being a basis of $M_i$ for $1\leq i\leq t-1$.
The second part follows by a similar reasonning.
\end{proof}
As a direct consequence of Lemma \ref{lem:ParCon} we obtain:
\begin{corollary}
Let $M_1$, $M_2$ and $M_3$ be matroids with $E(M_i) \cap E(M_j) = \{p\}$ for $i\neq j$. Assume that $p$ is a loop in none of the $M_i$. Then the parallel connection of the matroids $M_1, M_2, M_3$ is `associative', i.e.,
\begin{equation} \label{Eq:ass}
P(M_1,P(M_2,M_3)) = P(P(M_1,M_2),M_3).
\end{equation}
\end{corollary}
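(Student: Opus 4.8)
The plan is to show that the two threefold parallel connections have the same ground set and the same collection of bases; since a matroid is determined by its set of bases, this yields the equality. Both $P(M_1,P(M_2,M_3))$ and $P(P(M_1,M_2),M_3)$ are built on the ground set $E(M_1)\cup E(M_2)\cup E(M_3)$ (the basepoint $p$ being identified across all components), so it suffices to prove that a set $B\subseteq E(M_1)\cup E(M_2)\cup E(M_3)$ is a basis of the left-hand matroid exactly when it is a basis of the right-hand one.

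First I would verify that Lemma \ref{lem:ParCon} may be applied to the relevant twofold connections. Writing $N=P(M_2,M_3)$, we have $E(N)=E(M_2)\cup E(M_3)$ and $E(M_1)\cap E(N)=(E(M_1)\cap E(M_2))\cup(E(M_1)\cap E(M_3))=\{p\}$. Moreover $p$ is not a loop in $N$: inspecting the defining circuit set $\C_P$ of $P(M_2,M_3)$, the singleton $\{p\}$ cannot occur, since $\{p\}\in\C(M_2)\cup\C(M_3)$ would force $p$ to be a loop in $M_2$ or $M_3$, while the circuits of the form $(C_1\cup C_2)\setminus\{p\}$ never contain $p$. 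The same facts hold for $P(M_1,M_2)$. Hence the hypotheses of Lemma \ref{lem:ParCon} are met both for the outer connection $P(M_1,N)$ and for the inner connection $N=P(M_2,M_3)$, and symmetrically on the other side.

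Next I would unwind the bases. By the iterated definition of parallel connection, the right-hand side is literally $P(M_1,M_2,M_3)$, so Lemma \ref{lem:ParCon} with $t=3$ describes its bases directly: $B$ with $p\in B$ is a basis precisely when $B\cap E(M_i)$ is a basis of $M_i$ containing $p$ for every $i\in\{1,2,3\}$, and $B$ with $p\notin B$ is a basis precisely when, for exactly one index $i$, the set $B\cap E(M_i)$ is a basis of $M_i$ not containing $p$ while $(B\cap E(M_j))\cup\{p\}$ is a basis of $M_j$ for all $j\neq i$. For the left-hand side I would apply the $t=2$ case of Lemma \ref{lem:ParCon} to $P(M_1,N)$ and then, inside, to $N=P(M_2,M_3)$. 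The key observation is that this two-step unwinding reproduces the very same condition on $B\cap E(M_1)$, $B\cap E(M_2)$, $B\cap E(M_3)$, because the characterization furnished by Lemma \ref{lem:ParCon} is manifestly symmetric in the components and records no information about the order in which the connections were formed. Thus both matroids have identical bases, and equation (\ref{Eq:ass}) follows.

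The one point that needs genuine care is the bookkeeping in the case $p\notin B$. When the outer application of Lemma \ref{lem:ParCon} routes the omitted basepoint into $N$, I must check that the inner application then singles out precisely one of $M_2,M_3$ as the component whose piece omits $p$ while the other gains $p$, so that across all three components exactly one omits $p$; conversely, when the outer application keeps $p$ in $N$, the special component is $M_1$. Tracking these subcases shows that the ``exactly one special index'' clause of the $t=3$ description is recovered under either bracketing, which is the true content of associativity; the remaining verifications are routine consequences of the same symmetry.
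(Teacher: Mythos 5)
Your proof is correct and takes essentially the same approach as the paper, which presents the corollary as a direct consequence of Lemma \ref{lem:ParCon}: both bracketings have ground set $E(M_1)\cup E(M_2)\cup E(M_3)$, and the order-symmetric basis characterization of the lemma (applied with $t=3$ to the right-hand side and twice with $t=2$ to the left-hand side) shows they have the same bases. You simply make explicit the details the paper leaves implicit, namely verifying the lemma's hypotheses for the intermediate connection (in particular that $p$ is not a loop in $P(M_2,M_3)$) and the bookkeeping of the ``exactly one special component'' clause when $p\notin B$.
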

We want to show that parallel connections of certain types of matroids possess NBC bases. For this purpose Theorem \ref{Th:DelContr} will be used which makes it necessary to understand the effect of deletion and contraction of an element from a parallel connection. This is afforded by the next proposition.
\begin{proposition}\cite[Proposition 7.1.15 (v)]{Oxley}\label{pr:parallel}
Let $M=P(M_1,\ldots, M_t;p)$ where $M_1,\ldots M_t$ are such that $E(M_i) \cap E(M_j) = \{p\}$ for $i\neq j$. Let $e \in E(M_1) \setminus p$. Then
\begin{equation*}
M \setminus e = P(M_1\setminus e,M_2,\ldots, M_t) \qquad \mbox{and} \qquad M / e = P(M_1/e,M_2,\ldots,M_t).
\end{equation*}
\end{proposition}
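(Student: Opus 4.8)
The plan is to compare circuit sets, since a matroid is determined by its circuits (as recalled in Section \ref{sec:background}) and the parallel connection is defined precisely through its circuit family $\C_P$. First I would reduce the general statement to the case $t=2$. Using the associativity recorded in Equation (\ref{Eq:ass}) one may write $M=P(M_1,N;p)$ with $N=P(M_2,\ldots,M_t;p)$; since $e\in E(M_1)\setminus\{p\}$ lies in no ground set $E(M_j)$ with $j\geq 2$, the element $e$ does not meet $N$. Hence the two-component statements $M\setminus e=P(M_1\setminus e,N)$ and $M/e=P(M_1/e,N)$ reduce the proposition to the case $t=2$, after re-expanding $N$ by associativity.

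For the deletion statement with $t=2$ I would argue directly on circuits. The circuits of $M\setminus e$ are exactly the circuits of $M$ that avoid $e$. Splitting $\C(M)$ into the three families of the defining set $\C_P$ -- the circuits of $M_1$, the circuits of $M_2$, and the mixed circuits $(C_1\cup C_2)\setminus\{p\}$ with $p\in C_1\cap C_2$ -- and using that $e\notin E(M_2)$ and $e\neq p$, one sees that a circuit of $M$ avoids $e$ precisely when its $M_1$-part avoids $e$. Since $\C(M_1\setminus e)=\{C\in\C(M_1)\mid e\notin C\}$, the three resulting families are exactly the three families defining $P(M_1\setminus e,M_2)$, so the two circuit sets coincide. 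Along the way one checks that $p$ remains a non-loop of $M_1\setminus e$, so the parallel connection on the right is well defined.

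The contraction statement is the delicate one. Here I would invoke the characterization that the circuits of $M/e$ are the minimal nonempty members of $\{C\setminus e\mid C\in\C(M)\}$ (Proposition 3.1.11 in \cite{Oxley}, already used in the proof of Theorem \ref{Th:DelContr}). Partitioning $\C(M)$ as above and removing $e$ produces candidate sets of three kinds: sets $C_1\setminus e$ with $C_1\in\C(M_1)$, the circuits of $M_2$, and mixed sets $((C_1\setminus e)\cup C_2)\setminus\{p\}$. Within the first kind, passing to minimal members yields exactly $\C(M_1/e)$ by definition. The target $P(M_1/e,M_2)$ has circuits $\C(M_1/e)$, $\C(M_2)$, and the mixed sets built from circuits $D\in\C(M_1/e)$ with $p\in D$, so the goal is to show that the minimal members of the full candidate family are exactly these.

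The main obstacle is the minimality bookkeeping in this last step, where two issues must be handled. First, cross-minimality between the $M_1$-side and the $M_2$-side: a candidate contained in $E(M_1)\setminus\{e\}$ and one contained in $E(M_2)$ can only be nested if one of them lies inside $\{p\}$, i.e.\ if $\{p\}$ is a loop; invoking the standing hypothesis that $p$ is a loop in none of the $M_i$ rules this out, so the two sides do not interfere. Second, and more seriously, a mixed candidate $(C_1\cup C_2)\setminus\{p\}$ with $e\notin C_1$ need not be minimal: if $C_1$ properly contains some $C'\setminus e$ arising from a circuit $C'\ni e$ of $M_1$, a strictly smaller mixed candidate can appear. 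The real content is to show that after taking minimal members the surviving mixed sets are exactly those whose $M_1/e$-part $D$ is a genuine circuit of $M_1/e$ through $p$, i.e.\ that removing $e$ and passing to minimal sets commutes with forming the mixed family across the basepoint. An alternative route that sidesteps this bookkeeping uses duality: $M/e=(M^{\ast}\setminus e)^{\ast}$ together with the fact that the dual of a parallel connection is the series connection of the duals reduces the contraction statement to the already-settled deletion statement for series connections. This trades the minimality analysis for establishing the parallel--series duality, and either route recovers the cited result.
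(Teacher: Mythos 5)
First, a point of calibration: the paper contains no proof of this statement at all --- Proposition~\ref{pr:parallel} is imported verbatim from \cite[Proposition 7.1.15(v)]{Oxley} --- so the only benchmark is Oxley's own argument, which treats the two-matroid case and obtains the contraction identity from the deletion identity via the duality $P(M_1,M_2)^{*}=S(M_1^{*},M_2^{*})$; that is exactly the ``alternative route'' you mention in your last sentences. Your reduction to $t=2$ via the paper's associativity corollary is fine (commutativity of $P(\cdot,\cdot)$ is immediate from the symmetric circuit description), and your deletion argument is complete and correct: since $e\notin E(M_2)$ and $e\neq p$, each of the three families in $\C_P$ restricts exactly to the corresponding family for $P(M_1\setminus e,M_2)$.

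For the contraction half, what you label ``the real content'' is precisely what you do not carry out, and it conceals a degenerate case your sketch cannot absorb: if $e$ is parallel to $p$ in $M_1$, i.e.\ $\{e,p\}\in\C(M_1)$, then $\{p\}\in\C(M_1/e)$, so $p$ is a \emph{loop} of $M_1/e$; whenever $M_2$ has a circuit $C_2\ni p$, the family $\C_P$ built from $M_1/e$ and $M_2$ then contains both $C_2$ and the mixed set $C_2\setminus\{p\}$, hence is not a clutter and is not literally the circuit set of a matroid under the paper's definition of the parallel connection. Oxley's statement covers this through his conventions for a basepoint that is a loop, and note also that the non-loop hypothesis on $p$, which you import from Lemma~\ref{lem:ParCon}, is not actually part of Proposition~\ref{pr:parallel} as stated. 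Away from this boundary case your minimality bookkeeping does close: a mixed candidate $\bigl((C_1\setminus\{e\})\cup C_2\bigr)\setminus\{p\}$ with $C_1\setminus\{e\}$ non-minimal is absorbed either by a pure candidate (when the smaller circuit of $M_1/e$ avoids $p$) or by a strictly smaller mixed candidate (when it contains $p$); conversely a target mixed circuit $(D\cup C_2)\setminus\{p\}$ with $D\in\C(M_1/e)$ and $p\in D\cap C_2$ cannot properly contain any candidate, because $D$ and $C_2$ are minimal and $E(M_1)\cap E(M_2)=\{p\}$ forces any contained candidate to live on one side. So to make the proposal a proof, either write out that paragraph and treat the $e$-parallel-to-$p$ case separately (or exclude it with justification), or execute the duality route you name --- which costs you a proof of $S(M_1^{*},M_2^{*})=P(M_1,M_2)^{*}$ but is how the cited source actually proceeds.
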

We are now able to state one of the main results of this section.
\begin{theorem}\label{Th:phi}
Let $M_i = U(n_i-1, n_i)$ and $n_i\geq 2$ for $1\leq i \leq t$. Assume that $E(M_i)\cap E(M_j)=\{p\}$ for all $i\neq j$.
Then $M=P(M_1,\ldots, M_t;p)$ has an NBC basis.
\end{theorem}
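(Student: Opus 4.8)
The plan is to prove this by induction on the total number of elements of $M$, using the deletion-contraction axiom of Theorem \ref{Th:DelContr} as the inductive engine. The base cases are provided directly by Examples \ref{Ex:uniform} and \ref{Ex:Circuit}: a single component $M_i = U(n_i-1,n_i)$ has an NBC basis, and the degenerate case $U(1,1)$ (a coloop, giving the one-element simplex) is covered by Example \ref{Ex:uniform}. So I would reduce the general parallel connection to smaller such parallel connections by deleting and contracting a carefully chosen element.

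The key step is locating a size-$2$ cocircuit of $M$ together with a standard ordering in which Theorem \ref{Th:DelContr} applies, and then identifying the resulting deletion and contraction as parallel connections of the same type. First I would observe that if some $n_i = 2$, then $M_i = U(1,2)$ consists of two parallel elements, say $\{p, e\}$; by Proposition \ref{prop:cocircuits} the pair $\{p,e\}$ is a cocircuit of $M_i$, and since cocircuits of a parallel connection restrict appropriately, $\{p,e\}$ remains a cocircuit of $M$. Taking $e$ as the element to delete/contract, Proposition \ref{pr:parallel} gives
\begin{equation*}
M\setminus e = P(M_1\setminus e, M_2,\ldots,M_t) \qquad\text{and}\qquad M/e = P(M_1/e, M_2,\ldots,M_t),
\end{equation*}
where $M_1\setminus e = U(1,1)$ is a coloop on $\{p\}$ and $M_1/e$ has $p$ as a loop. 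Since attaching a coloop at the basepoint leaves the NBC structure unchanged up to a trivial factor, $M\setminus e$ is (up to this harmless factor) the parallel connection $P(M_2,\ldots,M_t)$ of the remaining uniform matroids, hence has an NBC basis by induction. For $M/e$, contracting collapses the first component so that $p$ becomes a loop; the surviving matroid is again a parallel connection of fewer uniform matroids of the required form, and induction applies.

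The case where every $n_i \geq 3$ is the main obstacle, because then no $M_i$ by itself supplies a $2$-element cocircuit containing a non-basepoint element whose deletion and contraction both stay inside the class. Here I would instead choose any component, say $M_t = U(n_t-1,n_t)$, pick a non-basepoint element $e$ of it, and exploit the circuit structure: every $n_t-1$ elements of $M_t$ form a basis, so the fundamental cocircuits of $M_t$ are exactly the pairs $\{e_i,e_j\}$ as in Example \ref{Ex:Circuit}. Selecting a standard ordering in which the last $r$ elements form a basis $B$ with $e \in B$ and its cocircuit-partner $f = e_{n-r}\notin B$, I can arrange $\{e,f\}=\{e_n,e_{n-r}\}$ to be a cocircuit of $M$ (the size-$2$ cocircuit survives into the parallel connection because $M_t$ meets the other components only in $p$, and $e,f \neq p$ can be chosen). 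Then Proposition \ref{pr:parallel} identifies $M\setminus e_n = P(M_1,\ldots,M_{t-1},U(n_t-2,n_t-1))$ and $M/e_n = P(M_1,\ldots,M_{t-1},U(n_t-2,n_t-1))$ (since for the uniform matroid both $U(n_t-1,n_t)\setminus e$ and $U(n_t-1,n_t)/e$ equal $U(n_t-2,n_t-1)$ up to the basepoint bookkeeping), each a parallel connection of the same type with smaller total ground set.

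Thus in every case Theorem \ref{Th:DelContr} reduces $M$ to strictly smaller parallel connections of uniform matroids $U(n_i-1,n_i)$, and the induction closes. The delicate points to verify carefully are: that the chosen $\{e_n,e_{n-r}\}$ is genuinely a cocircuit of the whole parallel connection $M$ (not merely of a single component), which I would justify via Proposition \ref{prop:cocircuits} and Lemma \ref{lem:ParCon}'s description of the bases; that the induced standard orderings on $M\setminus e_n$ and $M/e_n$ are the ones in which the inductive NBC bases live; and that the trivial coloop/loop factors arising from $U(1,1)$ and looped components do not disturb the lower order ideal, which follows from the join decomposition in Equation (\ref{Eq:NBC}) and Example \ref{Ex:uniform}.
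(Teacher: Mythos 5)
Your skeleton is indeed the paper's: induction on $|E(M)|$ (and $t$) driven by Theorem \ref{Th:DelContr}, with Proposition \ref{pr:parallel} identifying the deletion and contraction. But both of your case analyses contain a step that fails. In the case $n_i=2$, with $M_i=U(1,2)$ on $\{p,e\}$, your claim that $\{p,e\}$ remains a cocircuit of $M$ is false for $t\geq 2$: by Lemma \ref{lem:ParCon} you can build a basis of $M$ avoiding both $p$ and $e$ (give some $M_j$, $j\neq i$, a basis avoiding $p$, give every other component a basis through $p$ and then drop $p$; the trace on $M_i$ is then empty). Concretely, $P(U(1,2),U(1,2);p)=U(1,3)$, whose only cocircuit is the full three-element set. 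In fact no $2$-element cocircuit of $M$ contains $e$ at all, since for any $g$ the same construction produces a basis missing $\{e,g\}$; so Theorem \ref{Th:DelContr} simply cannot be invoked at such an element. Worse, $M_1/e$ has $p$ as a loop, which makes $k(\NBC(M/e_n))$ collapse to zero and takes you outside the inductive class; Example \ref{Ex:uniform} concerns coloops ($U(n,n)$), not loops, and does not repair this. The paper avoids the problem entirely: its theta labelling always places the pivot $e_n$ in the \emph{first component of size at least $3$}, and the $U(1,2)$ components (whose non-basepoint elements are parallel to $p$) are never deleted or contracted — they merely occupy non-basis positions in the ordering.

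In your main case the identification $U(n_t-1,n_t)\setminus e = U(n_t-2,n_t-1)$ is wrong: deleting an element from $U(n_t-1,n_t)$ gives the \emph{free} matroid $U(n_t-1,n_t-1)$, not a circuit (only the contraction gives $U(n_t-2,n_t-1)$). Consequently $M\setminus e_n$ is not a parallel connection of matroids of the form $U(m-1,m)$: the surviving non-basepoint elements of the pivot component become coloops of $M\setminus e_n$, and your induction hypothesis does not apply to it as stated. This is exactly the one delicate point in the paper's argument, which it handles by observing that each such coloop is a $1$-element cocircuit whose variable is killed by the l.s.o.p., hence ``contributes nothing to $k(\NBC(M\setminus e_n))$'', and that after ignoring these elements the induced ordering is again a theta labelling, so induction applies to the stripped matroid. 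Your closing sentence about coloop factors and Equation (\ref{Eq:NBC}) gestures at this repair, but it contradicts your own identification of $M\setminus e_n$ and is never carried out (one must also verify the induced ordering remains standard after stripping). On the positive side, your main-case cocircuit is sound and matches the paper: any pair of distinct non-basepoint elements lying in one component of size at least $3$ is a cocircuit of the whole parallel connection, verifiable via Proposition \ref{prop:cocircuits} and Lemma \ref{lem:ParCon}, and it can be placed at positions $n$ and $n-r$ of a standard ordering. So the approach is the right one, but the two identifications above must be corrected before the induction closes.
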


\begin{proof}
Throughout the proof we assume that $n_1 \leq n_2 \leq \cdots \leq n_t$.
We need to label the $n$ elements of $E(M)$ so that $e_1 < e_2 < \cdots < e_n$ is a standard ordering. Note that $n=\sum_{i=1}^t n_i - t + 1$. We can obtain a basis for $M$ by removing $p$ and an element (not $p$) from all but one of the $E(M_i)$.
Let $e_1=p$. For $2 \leq i \leq t+1$ let $e_i$ be an element of $E(M_{i-1})$ such that $\{e_i, p\}$ is a cocircuit of $M_{i-1}$. (Note that each $2$-element subset of $E(M_i)$ is a cocircuit.) Then we label the remaining elements $e_{n}, e_{n-1},\ldots, e_{t+2}$ in any order such that the elements of $E(M_1)\setminus \{p,e_2\}$ come first, the elements of $E(M_2) \setminus \{p,e_3\}$ next, and so on.

Let us call a matroid of this type a \emph{theta matroid} and a labelling of this type a \emph{theta labelling}.

Note that $e_n$ is in the first matroid $M_i$ of size at least $3$, and so $\{e_n, e_{i+1}\}$ is a cocircuit of $M_i$ and consequently of $M$. We use double induction on the number of elements $n$ of $M$ and the number of components of the parallel connection.
If $t=1$ then $M=U(n_1-1,n_1)$ and it follows from Example \ref{Ex:Circuit} that $M$ has an NBC basis in this case.\\
Now assume $t\geq 2$.
We consider the matroid $EM / e_n$. From Proposition \ref{pr:parallel} combined with $U(n-1,n)/e = U((n-1)-1,n-1)$, see \cite[Example 3.1.0]{Oxley}, we deduce that $M/e_n$ is a theta matroid. Its labelling is the induced one and thus a theta labelling. Thus, $M / e_n$ has an NBC basis by the induction hypothesis.

In general, $M \setminus e_n$ is not a theta matroid, but it is a theta matroid with the induced labelling a theta labelling if we ignore the other elements of $M_i$. (Each of these is a cocircuit of $M\setminus e_n$ and contributes nothing to $k(\NBC(M\setminus e_n))$.) This shows that $M \setminus e_n$ has an NBC basis by the induction hypothesis.
The claim now follows from Theorem \ref{Th:DelContr}.
\end{proof}
The matroids treated in the above theorem were named theta matroids since they are the matroid theoretic analogue of theta graphs considered in \cite{BrownSagan}.

The only uniform matroids which are regular are those of rank $0$ or rank $1$ or those of the form $U(n-1,n)$ and $U(n,n)$, see e.g., the appendix of \cite{Oxley}. One could therefore also think about looking at parallel connections of the latter class. However, as the next example shows, this class turns out to be uninteresting.

\begin{example}
Example \ref{Ex:uniform} shows that the matroid $U(n,n)$ has an NBC basis.
The parallel connection of $U(n_1,n_1)$ and $U(n_2,n_2)$ has rank $n_1+n_2-1$ and since $|E(P(U(n_1,n_1),U(n_2,n_2)))|=n_1+n_2-1$ it is again a uniform matroid. It therefore has an NBC basis and thus this property is preserved under taking parallel connections in this case. However, it does not enrich the class of matroids with this property since it acts trivially on this type of uniform matroids.
\end{example}

Having investigated parallel connections of matroids w.r.t. one basepoint one might want to look at parallel connections w.r.t. several basepoints. If $M_1,\ldots, M_t$ are matroids and $p_i\in E(M_i)\cap E(M_{i+1})$ for $1\leq i\leq t-1$ we set $P_1=M_1$ and $P_i=P(P_{i-1},M_{i};p_{i-1})$ for $2\leq i\leq t$. We write $P(M_1,\ldots,M_t;p_1,\ldots,p_{t-1})$ for $P_t$ and call this the \emph{parallel connection of $M_1,\ldots, M_t$ w.r.t. $p_1,\ldots, p_t$}.
\begin{theorem}\label{Th:theta}
Let $M_i = U(n_i-1, n_i)$ and $n_i\geq 2$ for $i \in \{1,\ldots, t\}$. Assume $E(M_{i})\cap E(M_{i+1})=\{p_{i+1}\}$ for $1\leq i\leq t-1$ and $p_i\neq p_j$ for $i\neq j$. Then $M=P(M_1,\ldots, M_t;p_2,\ldots,p_t)$ has an NBC basis.
\end{theorem}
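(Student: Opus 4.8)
The plan is to prove Theorem~\ref{Th:theta} by induction on the total number $n$ of elements, using the deletion-contraction axiom (Theorem~\ref{Th:DelContr}) exactly as in the proof of Theorem~\ref{Th:phi}, but with a more careful choice of standard ordering that respects the chain structure of the different basepoints. First I would set up notation: since each $M_i=U(n_i-1,n_i)$ and consecutive components meet in a single basepoint $p_{i+1}$, the matroid $M$ is a ``path-like'' iterated parallel connection rather than a ``star-like'' one. I would establish the analogues of the structural facts used before: by Proposition~\ref{pr:parallel} (and its iterate via the associativity Corollary following Lemma~\ref{lem:ParCon}), deleting or contracting an element $e\in E(M_j)\setminus\{p_j,p_{j+1}\}$ only affects the single component $M_j$, replacing it by $M_j\setminus e$ or $M_j/e=U(n_j-2,n_j-1)$ while leaving the parallel-connection structure with the same basepoints intact.

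The key step is to fix a standard ordering. I would choose a basis $B$ of $M$ by selecting, in a coherent way along the chain, which basepoints lie in $B$; concretely, pick an endpoint component, say $M_t$, as the ``full'' component and, analogously to the theta labelling, remove from every other component one non-basepoint element together with the basepoint(s) appropriately, so that $B$ consists of the last $r$ elements in the ordering. As in Theorem~\ref{Th:phi}, I would arrange that $e_n$ lies in the first component $M_j$ of size at least $3$ and that $\{e_n,e_{j+1}\}$ is a cocircuit of $M$ (recall every $2$-subset of $E(M_j)$ is a cocircuit of $M_j$, and by Proposition~\ref{prop:cocircuits} together with Lemma~\ref{lem:ParCon} this remains a cocircuit of the whole parallel connection). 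I would then verify that $\{e_n,e_{n-r}\}$ is a cocircuit and that the ordering is standard, so that Theorem~\ref{Th:DelContr} applies with the element $e_n$.

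With the ordering fixed, the induction goes through two reductions. For the contraction $M/e_n$: by Proposition~\ref{pr:parallel} and the identity $U(m-1,m)/e=U(m-2,m-1)$ from \cite[Example 3.1.0]{Oxley}, contracting $e_n$ replaces $M_j$ by a smaller uniform matroid of the same type (or, if $M_j$ had size $3$, by $U(1,2)$), keeping $M/e_n$ within the class of iterated parallel connections of the theorem with the induced standard ordering. Hence $k(\NBC(M/e_n))$ has an NBC basis by induction. For the deletion $M\setminus e_n$: as in Theorem~\ref{Th:phi}, $M\setminus e_n$ need not literally be of the prescribed form, but deleting $e_n$ from $U(n_j-1,n_j)$ yields $U(n_j-1,n_j-1)=U(m,m)$ on the remaining elements of that component, whose extra elements are coloops contributing trivial factors (their singletons are cocircuits, cf.\ Example~\ref{Ex:uniform}); ignoring these, $M\setminus e_n$ is again an iterated parallel connection of the allowed type with an induced standard labelling, so it too has an NBC basis by induction. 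Theorem~\ref{Th:DelContr} then yields an NBC basis for $k(\NBC(M))$. The base case $t=1$ is $M=U(n_1-1,n_1)$, handled by Example~\ref{Ex:Circuit}.

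The main obstacle I anticipate is the bookkeeping of the standard ordering across \emph{several distinct} basepoints. In the single-basepoint case the theta labelling is unambiguous because all components share one point; with a chain $p_2,\dots,p_t$ one must decide, consistently with Lemma~\ref{lem:ParCon}'s basis criterion, which basepoints belong to the chosen basis $B$ and confirm that contracting/deleting $e_n$ does not disturb the basepoint incidences of the unaffected components, so that the induced ordering on $M/e_n$ and $M\setminus e_n$ is genuinely standard. I would argue this by appealing to Proposition~\ref{pr:parallel} (the operation is local to $M_j$) and to the fact that $e_n$ and $e_{j+1}$ are neither basepoints nor coloops in $M_j$, so Lemma~\ref{Lem:BasisExchange} guarantees the cocircuit $\{e_n,e_{n-r}\}$ and the compatibility of the induced signings needed for Theorem~\ref{Th:DelContr}. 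Once this is checked, the remainder is a verbatim application of the preceding theorem's argument.
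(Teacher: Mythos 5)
Your overall strategy is the same as the paper's: the published proof of Theorem~\ref{Th:theta} likewise fixes a special standard ordering (a ``phi labelling''), inducts on the cardinality of the ground set and the number of components, applies Theorem~\ref{Th:DelContr} to $e_n$, handles the contraction via Proposition~\ref{pr:parallel} together with $U(m-1,m)/e=U(m-2,m-1)$, handles the deletion by discarding the coloops that the affected component degenerates into, and gets its base cases from the single-basepoint result (the paper uses $t=1,2$ via Theorem~\ref{Th:phi}; your $t=1$ via Example~\ref{Ex:Circuit} plus the general step is an acceptable variant).

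There is, however, a concrete defect, and it sits exactly at the one point where this theorem genuinely differs from Theorem~\ref{Th:phi}: the construction of the standard ordering. Here $n=\sum_i n_i-(t-1)$ and $r=\sum_i(n_i-1)-(t-1)$, so the corank is exactly $t$; the complement of the basis $B$ must have precisely $t$ elements. Your recipe --- remove from every component other than the full one a non-basepoint element \emph{together with} the basepoints --- excludes $(t-1)+(t-1)=2(t-1)$ elements, which exceeds $t$ as soon as $t\geq 3$, so read literally your ordering is not standard; you have imported the single-basepoint (theta) recipe, where ``one non-basepoint element per component plus the one basepoint'' does give the right count, into the chain setting where it does not. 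The paper's phi labelling resolves this: set $e_i=p_{t+1-i}$ for $1\leq i\leq t-1$, choose a \emph{single} extra element $e_t\in E(M_1)\setminus\{p_2\}$, and label the remaining elements $e_{t+1},\ldots,e_n$ so that those of $E(M_t)$ come first and those of $E(M_1)$ last. Thus the cobasis consists of all $t-1$ basepoints together with just one non-basepoint element of an end component, and (when $n_1\geq 3$) the pair $\{e_n,e_{n-r}\}=\{e_n,e_t\}$ lies inside the single component $M_1$ and avoids $p_2$, hence survives the iterated parallel connection as a cocircuit of $M$, which is the hypothesis Theorem~\ref{Th:DelContr} actually requires --- note that your cocircuit $\{e_n,e_{j+1}\}$ is of no use unless the ordering makes its second element equal to $e_{n-r}$, which your sketch leaves undetermined. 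You correctly flagged this bookkeeping as the main obstacle, but resolving it is essentially the only new content of the paper's proof; the rest is, as you say, a repetition of the Theorem~\ref{Th:phi} argument.
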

\begin{proof}
As in Theorem \ref{Th:phi} we first need to introduce a labelling of $E(M)$ that is a standard ordering. First choose an element $e_t\in E(M_1)$ with $e_t\neq p_2$. Further set $e_i=p_{t+1-i}$ for $1\leq i\leq t-1$. Label the remaining elements of $E(M)$ by $e_{t+1},e_{t+2},\ldots, e_{n}$ where the missing elements of $E(M_t)$ come first, those of $E(M_{t-1})$ next and those of $E(M_1)$ last. 
We call a parallel connection of uniform matroids $U(n_i-1,n_i)$ with the introduced labelling a \emph{phi matroid} with \emph{phi labelling}.
We proceed by induction over the cardinality of the ground set $E(M)$ and the number of components of $M$. The cases $t=1$ and $t=2$ directly follow from Theorem \ref{Th:phi}. For the induction step note that if we assume $n_1\leq n_2\leq \cdots\leq n_t$ we have $e_n\in M_i$ for the first matroid of size $3$ again. The claim then follows using arguments similar to those in the proof of Theorem \ref{Th:phi} and by the application of Theorem \ref{Th:DelContr}.
\end{proof}
Although the last two results give examples of classes of regular matroids which possess NBC bases it would be interesting to know if there exist other classes of matroids which do have this property.
Brown and Sagan \cite{BrownSagan} conjectured that all graphic matroids possess an NBC basis. However, this would not be a complete characterization of matroids having this property since there are examples of matroids which are not graphic but have an NBC basis.
\begin{example}
The dual matroid $M^*(K_{3,3})$ of the graphic matroid corresponding to the complete bipartite graph $K_{3,3}$ is regular as well as cographic (i.e., its dual is graphic) but not graphic.
A representation (over any field) for $M(K_{3,3})$ is given by the vertex-edge incidence matrix of $K_{3,3}$:
\[
\begin{pmatrix}
1 & 1 & 1 & 0 & 0 & 0 & 0 & 0 & 0\\
0 & 0 & 0 & 1 & 1 & 1 & 0 & 0 & 0\\
0 & 0 & 0 & 0 & 0 & 0 & 1 & 1 & 1\\
0 & 0 & 1 & 0 & 0 & 1 & 0 & 0 & 1\\
0 & 1 & 0 & 0 & 1 & 0 & 0 & 1 & 0\\
1 & 0 & 0 & 1 & 0 & 0 & 1 & 0 & 0
\end{pmatrix}.
\]
Following \cite{Oxley} we derive a representation for the dual matroid $M^*(K_{3,3})$
\[
\begin{pmatrix}
1 & 1 & 0 & 1 & 1 & 0 & 0 & 0 & 0\\
1 & 0 & 1 & 1 & 0 & 1 & 0 & 0 & 0\\
1 & 1 & 0 & 0 & 0 & 0 & 1 & 1 & 0\\
1 & 0 & 1 & 0 & 0 & 0 & 1 & 0 & 1
\end{pmatrix}.
\]
Reordering the columns of the above representation yields a standard ordering for $M^*(K_{3,3})$ (i.e., the last five columns
are a basis)
\[
\begin{pmatrix}
0 & 0 & 1 & 0 & 1 & 0 & 0 & 1 & 1\\
0 & 0 & 0 & 0 & 1 & 1 & 1 & 0 & 1\\
1 & 1 & 1 & 0 & 0 & 0 & 0 & 0 & 1\\
0 & 1 & 0 & 1 & 0 & 0 & 1 & 0 & 1
\end{pmatrix}.
\]
Using this ordering we get the following set of NBC monomials 
\begin{align*}
L(M^*(K_{3,3}))=\{&1, x_1, x_2, x_3, x_4, x_5, x_1^2, x_1 x_2, x_1 x_4, x_2^2 , x_2 x_3, x_2 x_5, x_3 x_4, x_3 x_5, x_4 x_5,\\
 &x_1^2 x_2, x_1^2 x_4, x_2^2 x_3, x_2^2 x_5, x_3 x_4 x_5\}
\end{align*}
Using a computer algebra system e.g., Macaulay2, \cite{M2} it can be verified that this set is indeed a basis for the quotient $k(\NBC(M^*(K_{3,3})))$.
\end{example}
Even though we have just stated an example of a regular but not graphic matroid which possesses an NBC basis -- as the following example shows -- this cannot 
be expected to be true for all regular matroids.
\begin{example}
An example of a regular matroid which is neither graphic nor cographic is $R_{10}$ which can be represented by the matrix
\[
\begin{pmatrix}
1 & 0 & 0 & 0 & 0 & 1 & 1 & 0 & 0 & 1 \\ 
0 & 1 & 0 & 0 & 0 & 1 & 1 & 1 & 0 & 0 \\ 
0 & 0 & 1 & 0 & 0 & 0 & 1 & 1 & 1 & 0 \\
0 & 0 & 0 & 1 & 0 & 0 & 0 & 1 & 1 & 1 \\
0 & 0 & 0 & 0 & 1 & 1 & 0 & 0 & 1 & 1 \\
\end{pmatrix}
\]
over any field. This is a matroid of rank five on the edge set $E(R_{10})$ of ten elements which has $162$ bases. Thus there are $162\cdot 5! \cdot (10 -5)! = 2.332.800$ standard orderings of the edge set $E(R_{10})$. Computer calculations using Sage \cite{sage} show that none of these orderings gives an NBC basis. 
\end{example}
The above examples suggests the following extension of the conjecture in \cite{BrownSagan}.
\begin{conjecture}
Let $M$ be a matroid on ground set $E(M)$. If $M$ is graphic or cographic there exists a standard ordering
of $E(M)$ such than $\NBC(M)$ has an NBC basis.
\end{conjecture}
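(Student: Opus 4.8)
The final statement is a conjecture, so the following is an exploratory plan rather than a finished argument. The overall strategy is to single out graphic and cographic matroids as exactly the classes on which the deletion–contraction machinery of Theorem \ref{Th:DelContr} can be driven down to the base cases of Examples \ref{Ex:uniform} and \ref{Ex:Circuit}, while keeping in mind that any correct approach must genuinely use graphicity or cographicity, since the $R_{10}$ example shows the statement fails for regular matroids in general. First I would reduce to the $2$-connected case exactly as in Section \ref{sec:lsop}, so that on the graph side one may assume the underlying graph is $2$-connected. Because the NBC construction is not self-dual -- the Stanley--Reisner ideal of $\NBC(M)$ is built from the circuits $\C(M)$ while the l.s.o.p. $\Theta^B$ is built from the cocircuits $\C(M^*)$ -- the two cases cannot be collapsed into one by passing to $M^*$, and I would run them as parallel inductions that mirror each other under the exchange of $\C(M)$ and $\C(M^*)$.

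The inductive engine is Theorem \ref{Th:DelContr}, which reduces $M$ to $M\setminus e_n$ and $M/e_n$ whenever $\{e_n,e_{n-r}\}$ is a $2$-cocircuit. For a graphic matroid a $2$-cocircuit is a bond of size two, i.e. a series pair of edges at a vertex of degree $2$; dually, for a cographic matroid a $2$-cocircuit corresponds to a pair of parallel edges. Thus the axiom applies immediately whenever the graph has a degree-$2$ vertex (graphic case) or a parallel pair (cographic case), and one can hope to strip these off along an ear decomposition. This is precisely where the argument stalls: a $3$-connected simple graph such as $K_4$ has neither a $2$-cocircuit nor a parallel pair, so the hypothesis of Theorem \ref{Th:DelContr} is never met, and the induction cannot even begin for the bulk of the class.

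Hence the crux is to remove the $2$-cocircuit hypothesis. I would attempt a deletion–contraction axiom for an arbitrary non-loop non-coloop element $e_n$, starting from the $h$-vector identity (\ref{eq:h}) -- which holds for every such element -- and trying to promote it to a disjoint decomposition $L(M)=L(M\setminus e_n)\dotcup x_{n-r}L(M/e_n)$ of lower order ideals, in the spirit of (\ref{Eq:LowerOrderIdeal}). The obstruction is concrete and is the main obstacle I anticipate: in the proof of Theorem \ref{Th:DelContr} the cocircuit $\{e_n,e_{n-r}\}$ makes the substitution (\ref{eq:red}) replace $x_n$ by the single variable $x_{n-r}$, which is exactly what produces the clean factor $x_{n-r}$ and the bijection between type I generators $p_C$ and $x_{n-r}\widetilde{p}_{C\setminus\{e_n\}}$. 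For a general $e_n$ the relation (\ref{eq:red}) expresses $x_n$ as a signed sum over the whole fundamental cocircuit $\coc(B,e_n)$, so the $p_C$ no longer factor through one variable and $L(M)$ does not split along a coordinate hyperplane. Controlling this would require a careful choice of the standard ordering together with a term-by-term analysis of which monomials of $p_C$ survive the substitution.

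An alternative, and possibly cleaner, route is to settle the Question raised after Definition \ref{def:nbc} for the two classes at hand. If for a graphic or cographic matroid one can exhibit a term order $\preceq$ under which $m_C$ is the leading monomial of $p_C$ for every circuit $C$ and the generators $p_C$ form a Gröbner basis of $J(\NBC(M))$, then Lemma \ref{lem:term} together with Macaulay's theorem would immediately identify $L(M)=\Mon(n)\setminus U(M)$ as the standard monomial basis of $k(\NBC(M))$, and the conjecture would follow. The hard part here is the Gröbner property, namely that every S-polynomial among the $p_C$ reduces to zero; this is exactly the step that must exploit graphicity or cographicity, since $R_{10}$ forces any such term order to fail in the general regular setting. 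I would search for $\preceq$ among the weight or lexicographic orders compatible with $e_1<\cdots<e_n$, and try to match the S-polynomial reductions to the basis-exchange combinatorics of Lemma \ref{Lem:BasisExchange} and to the circuit/cocircuit structure of graphs and their duals, which is the feature that distinguishes graphic and cographic matroids from $R_{10}$.
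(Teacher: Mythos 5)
The statement you were asked about is a conjecture: the paper offers no proof of it, only supporting evidence (the computational verification that $M^*(K_{3,3})$ has an NBC basis, the Brown--Sagan conjecture for graphic matroids, and the $R_{10}$ computation showing the property fails for general regular matroids). Your proposal, which candidly presents itself as a plan rather than an argument, is therefore not in deficit relative to anything in the paper --- but it is also not a proof, and the gap is exactly where you locate it. Your first route founders on the hypothesis of Theorem \ref{Th:DelContr}: the theorem needs a $2$-cocircuit $\{e_n,e_{n-r}\}$, and as you correctly observe, a $3$-connected simple graph such as $K_4$ has no $2$-element bond (graphic case) and no parallel pair (cographic case), so the induction never starts for the generic member of either class. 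Your diagnosis of why the hypothesis cannot simply be dropped is also accurate and matches the mechanics of the paper's proof: the substitution (\ref{eq:red}) replaces $x_n$ by the single variable $x_{n-r}$ precisely because the cocircuit has size two, and this is what makes the type I generators factor as $p_C = x_{n-r}\widetilde{p}_{C\setminus\{e_n\}}$ and yields the disjoint splitting (\ref{Eq:LowerOrderIdeal}); for a larger fundamental cocircuit the substituted polynomial has several terms and $L(M)$ need not split along a coordinate hyperplane. What remains missing from your plan is any candidate replacement mechanism --- a modified splitting, a different decomposition of the graph (ear decompositions alone do not suffice, since ears are removed one edge at a time while the theorem consumes a series or parallel pair), or a structural reduction to the parallel-connection classes the paper does handle in Theorems \ref{Th:phi} and \ref{Th:theta}.

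Your second route, via the Question following Definition \ref{def:nbc}, is a reasonable reformulation but is likewise left entirely open: Lemma \ref{lem:term} only guarantees that $m_C$ \emph{occurs} as a term of $p_C$, not that it is the leading term under some order, and even granting a suitable $\preceq$, you would still have to show the $p_C$ form a Gr\"obner basis of $J(\NBC(M))$, i.e., that all S-polynomials reduce to zero --- and this is the step that must use graphicity or cographicity, since $R_{10}$ shows it cannot hold for regular matroids in general. Note one further subtlety your plan glosses over: the conjecture asserts the existence of \emph{some} standard ordering, so any Gr\"obner or deletion--contraction strategy must be coupled with a choice of ordering, and the $R_{10}$ computation (all $2{,}332{,}800$ standard orderings fail) shows that this choice genuinely matters and cannot be made arbitrarily. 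In short: your reading of the landscape is correct and consistent with the paper, your identification of the obstruction in Theorem \ref{Th:DelContr} is precise, but both proposed routes stop at exactly the point where new ideas would be needed, so the conjecture remains as open after your proposal as it is in the paper.
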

It could be further interesting -- not only for its own right -- but also in order to identify classes of matroids which have NBC bases to find constructions and operations on matroids which preserve the property of having an NBC basis. For example one could look at arbitrary parallel and series connections.
Even though such a statement is maybe not possible in general it seems worth studying if anything can be said about minors of matroids imposing some restrictions.

\section*{acknowledgment}
We would like to thank Einar Steingr\'imsson for encouraging us to carry out this project together. Furthermore, we would like to thank Jason Brown and Dave Wagner for providing us additional information about their paper \cite{BrownColbournWagner}.
We are also grateful to Volkmar Welker for comments on an earlier version of this paper.

\bibliographystyle{alpha}
\bibliography{biblio}

\end{document}